\newtheorem{defn}{Definition}[section]
\newtheorem{theorem}{Theorem}[section]
\newtheorem{corollary}{Corollary}[section]
\newtheorem{lemma}{Lemma}[section]
\newtheorem{proposition}{Proposition}[section]
\newenvironment{proof}[1][Proof]{\noindent\textbf{#1.} }{\ \rule{0.5em}{0.5em}}
\newcounter{ourcount}
\newcounter{myenumi}
\newcommand*{\rom}[1]{\expandafter\@slowromancap\romannumeral #1@}
\newcommand{\leta}{\mathbb{0}}
\newcommand{\letb}{\mathbb{1}}
\newcommand{\letc}{\mathbb{2}}
\newcommand{\hA}{\mathcal{A}^*}
\newcommand{\cA}{\mathcal{A}}
\newcommand{\cB}{\mathcal{B}}
\newcommand{\cM}{\mathcal{M}}
\newcommand{\cF}{\mathcal{F}}
\newcommand{\com}{\Psi}
\newcommand{\GPZ}{\mathsf{PZ}}
\newcommand{\GZ}{\mathsf{Z}}
\newcommand{\ab}[1]{\rho_{#1}^{\mathrm{ab}}}
\newcommand{\cha}{\check{\alpha}}
\newcommand{\chb}{\check{\beta}}
\newcommand{\chg}{\check{\gamma}}
\renewcommand{\thefootnote}{\arabic{footnote}}
\newcounter{savefootnote}
\newcounter{symfootnote}
\newcommand{\symfootnote}[1]{%
   \setcounter{savefootnote}{\value{footnote}}%
   \setcounter{footnote}{\value{symfootnote}}%
   \ifnum\value{footnote}>8\setcounter{footnote}{0}\fi%
   \let\oldthefootnote=\thefootnote%
   \renewcommand{\thefootnote}{\fnsymbol{footnote}}%
   \footnote{#1}%
   \let\thefootnote=\oldthefootnote%
   \setcounter{symfootnote}{\value{footnote}}%
   \setcounter{footnote}{\value{savefootnote}}%
}
\begin{document}
\begin{center}
    {\Large \textbf{On balanced and abelian properties of circular words over a ternary alphabet}}

{\small
\vspace{.5cm} 
{\large \textbf{D.V. Bulgakova\symfootnote{dvbulgakova@gmail.com}, N. Buzhinsky$^{1}$, Y.O. Goncharov$^{2,3}$\symfootnote{yegor.goncharov@gmail.com} }}

\vskip .35cm 

\vskip .5cm $^{1}$ INSERM/Aix Marseille Universit\'{e}, \\ UMR1067, 13288, Marseille, France

\vskip .35cm $^{2}$ Service de Physique de l’Univers, Champs et Gravitation,
Universit\'e de Mons - UMONS,\\ 20 Place du Parc, B-7000 Mons, Belgique

\vskip .35cm $^{3}$ Institut Denis Poisson,
Universit\'e de Tours, Universit\'e d’Orl\'eans, CNRS,\\
Parc de Grandmont, 37200 Tours, France
}
    
\end{center}

\begin{abstract}
    We revisit the question of classification of balanced circular words and focus on the case of a ternary alphabet. We propose a $3$-dimensional generalisation of the discrete approximation representation of Christoffel words. By considering the minimal bound $3$ for abelian complexity of balanced circular words over a ternary alphabet, we provide a classification of all circular words over a ternary alphabet with abelian complexity subject to this bound. This result also allows us to construct an uncountable set of bi-infinite aperiodic words with abelian complexity equal to $3$.
\end{abstract}

\section{Introduction}
The notion of a balanced word originates from combinatorial studies of words \cite{L}, with its applications appearing in various areas where one needs to distribute objects as ``evenly'' as possible, for example, in algorithms dealing with synchronisation of processes and optimisation \cite{AGH,PV,S}, optimal scheduling \cite{G,MV}, construction of musical scales \cite{R}. 
Balanced words over a binary alphabet are well studied. The infinite aperiodic balanced words are called Sturmian words \cite{V,F} while the periodic ones are called Christoffel words \cite{C,BLRS}. For some applications it is more convenient to consider the latter as circular balanced words which can be imagined as finite words written along a circle and read periodically. 

While there is a complete classification of balanced circular words over a binary alphabet 
given in terms of Christoffel words, general classification of balanced circular words over arbitrary $N$-ary alphabets (which set we denote $[\cB_{N}]$) is lacking. The celebrated particular case is described by the so-called Fraenkel's conjecture \cite{Fraenkel,AGH} (proven to hold for $N\leqslant 7$) which states that there is unique, modulo isomorphism of alphabets, balanced circular word with pairwise different occurrences of letters, namely the so-called Fraenkel word. The following important partial result is also available: some of the circular balanced words over higher-$N$ alphabets can be constructed inductively starting from those over some lower-$N$ alphabet \cite{Gr,BCDJL}. Namely, consider a circular balanced word over $N$-ary alphabet with $n$ occurrences of some letter $a$ such that $n = km$ with some integers $m$ and $k>1$. Then one can denote $a_0 := a$ and introduce $k-1$ new letters $a_1,\dots,a_{k-1}$. By substituting $p$th occurrence of the letter $a$ in the initial circular word by $a_{p\;(\mathrm{mod}\;k)}$ one arrives at a new circular balanced word over $(N+k-1)$-ary alphabet. Applied at $N=3$, the words resulting from this procedure, supplemented with the Fraenkel word, give a complete classification of the set $[\cB_{3}]$. In order to compare the classification for $[\cB_{3}]$ with possible difficulties arising on a path towards describing $[\cB_{N}]$ for any $N$ we would like to draw the reader's attention to the work \cite{BCDJL} where particular classes of circular balanced words for $N=4,5,6$ were found numerically.

In the present work we propose a number of graphical constructions for the words $[\cB_{3}]$, except the Frankel word. First, there is an equivalent definition available for the circular words in question by generalising the well-known $2$-dimensional discrete approximation representation of Christoffel words \cite{L} to $3$ dimensions, with discrete walks demanded to approach particular planes from below without crossing. We also arrange the aforementioned set of words in a graph isomorphic to the Calkin-Wilf tree.

Along with the balanced property one can consider other characteristics of words based on the notion of abelian equivalence, see \cite{Pu}  and  references  therein. Two finite words are called abelian-equivalent if they can be obtained from each other by permutations of letters. The function which counts the number of classes of abelian-equivalent factors of a word is called the abelian complexity.  For a binary alphabet balanced circular words coincide with the set of all circular words with abelian complexity $\leqslant 2$. In this work we focus on words with abelian complexity $\leqslant 3$ over a ternary alphabet: first, we propose a classification of the circular words as above (which set we denote $[\cM_{3}]$) and show that they include balanced circular words as a proper subset. Meanwhile, for the ternary circular balanced words $[\cB_{3}]$ we distinguish those with abelian complexity exactly $3$. Second, we construct an uncountably infinite set of bi-infinite aperiodic words with abelian complexity $3$ which generalises the result of \cite{RSZ}. 

The paper is organised as follows. In Section \ref{sec:definitions} we recall the definitions concerning words, their properties and operations on them. In Section \ref{sec:classification} we recall the classifications for $[\cB_{2}]$ and $[\cB_{3}]$, give a classification for $[\cM_{3}]$  and construct (some of the) bi-infinite aperiodic words with abelian complexity $3$. In Section \ref{sec:geometry} we propose an equivalent classification of $[\cB_{3}]$ by generalising discrete approximations to a $3$-dimensional space. There we also propose a way of organising the set $[\cB_{3}]$ in a form of a binary tree. Technical proofs are placed in the appendix section \ref{sec:proofs}.

\section{Balanced and abelian properties of circular words}\label{sec:definitions}
\subsection{Alphabet and words}
Let $A_N = \{\leta,\letb,\letc,\dots\}$ be a $N$-ary alphabet\footnote{Despite decimal digits are available only for $N\leqslant 10$, the purpose of the current paper is covered by $N=2,3$.} supplemented by an order-preserving map $\iota:\{0,\dots, N-1\}\to A_N$. A {\it word} over an alphabet $A_N$ is an element of the free monoid $\hA_N$ generated by $A_N$, with the unit element $\varepsilon\in \hA_{N}$ referred to as the empty word. Each word $w$ is written as $w=a_1a_2\dots a_\ell$ for letters $a_1,a_2,\dots,a_{\ell}$. An integer $\ell\geqslant 0$ is called the length of $w$ and denoted by $|w|=\ell$. By definition, $\varepsilon$ is the word with zero length.

Let $\cA^{\ell}_{N}\subset \hA_{N}$ be a subset of words of length $\ell$. There is a decomposition of $\hA_{N}$ by the words' length:
\begin{equation*}
    \hA_{N} = \bigcup_{\ell = 0}^{\infty} \cA^{\ell}_{N}.
\end{equation*}

The monoid $\hA_N$ admits the following automorphisms. Let $\mathfrak{S}_N$ be the symmetric group whose elements are permutations of letters of $A_N$, i.e. for $\sigma\in\mathfrak{S}_N$, $\leta \to \sigma(\leta)$, $\letb\to\sigma(\letb)$, {\it etc.},  acting on $\hA_N$ as homomorphisms: a word $w=a_1\dots a_\ell\in\cA^{\ell}_N$ is mapped to $\sigma(w):=\sigma(a_1)\dots\sigma(a_{\ell})$ and $\sigma(\varepsilon) = \varepsilon$. Let $I$ denote invertion of a word, $I(a_1\dots a_{\ell}) = a_{\ell}\dots a_1$, and let $\GZ_{\ell}$ be an additive group acting  by cyclic permutations of letters in a word generated by $T(a_1a_2\dots a_{\ell}) = a_2\dots a_{\ell} a_1$. We denote by $\GZ$ the whole group of cyclic permutations of finite words of any length. The group $\GZ$ is a subgroup of a bigger group $\GPZ\supset \GZ$ generated by both $T$ and $I$. Actions of $\GPZ$ and $\mathfrak{S}_N$ mutually commute, and hence the whole group  of automorphisms under consideration is $G_N = \mathfrak{S}_N\times \GPZ$.

We define {\it circular words} as classes of words $[\hA_{N}]:=\hA_N\slash \GZ$ related by cyclic permutations of letters. For any representative $w\in\hA_N$ we denote the respective circular word as $[w]\in [\hA_{N}]$. We will say that $[w]\in[\hA_{N}]$ has length $\ell$ and write $[w]\in[\cA^{\ell}_{N}]$ if $|w| = \ell$. Two words $w$, $w^{\prime}$ belonging to the same class $[\hA_{N}]$, {\it i.e.} related by $\GZ$-action, are said to be conjugate.  Automorphisms of $[\hA_{N}]$ are given by the factor-group $[G_{N}]:=G_N\slash \GZ\cong \mathfrak{S}_N\times\mathbb{Z}_2$ (with the factor $\mathbb{Z}_2$ generated by the inversion $I$).
For a set of words $X\subset \hA_N$ define $[X]\subset[\hA_{N}]$ to be a set of all circular words containing representatives from $X$.

Let $H$ be a subgroup of $G_{N}$ (respectively, $[G_{N}]$). Then for any subset $X$ of $\hA_{N}$ (respectively, of $[\hA_{N}]$) notation $HX$ stands for the set of images of $X$ under the action of $H$. 

A word $u\in \hA_{N}$ is a {\it factor} of another word $w\in \hA_N$, denoted as $u\subset w$, if $w=vuv'$ for some words $v,\,v'\in \hA_N$. In particular, any word is a factor of itself. For two words $w,w^{\prime}\in \hA_N$ conjugate to each other there exist factors $u,v\in \hA_N$ such that $w=uv$ and $w'=vu$. A factor $u\in\hA_{N}$ of a circular word $[w]\in[\hA_{N}]$ is understood as $u\subset w^{\prime}\in [w]$ for some representative $w^{\prime}$ and denoted as $u\subset [w]$.

If for a word $w\in \hA_N$ there exists a factor $v\subset w$ such that $w = \underbrace{v\dots v}_{p>1}$, then we will say that $w$ is the $p$th power of $v$ and write $w = v^p$, while otherwise a word will be said to be {\it primitive}. 
Note that if a word is primitive, then so are its conjugates. Moreover, if a word $w$ is a $p$th power of some primitive factor $v$, then any word conjugate to $w$ is a $p$th power of a primitive factor conjugate to $v$. 

Due to that, the following definitions are correct in the sense of independence of choices of representatives: {\it i)} a circular word will be said to be primitive if it contains a primitive representative, {\it ii)} a circular word $[w]$ will be said to be a $p$th power of a primitive circular word $[v]$ (denoted by $[w] = [v]^p$) if for any representative $w^{\prime}\in [w]$ there is $v^{\prime}\in [v]$, such that $w^{\prime} = v^{\prime p}$. Note that $[v]^p = [v^p]$.


\subsection{Characteristics of distribution of letters}

Let $|w|_a$ denote the number of distinct occurrences of a letter $a$ in a word $w\in \hA_N$. To any word $w\in\hA_N$ we associate its {\it Parikh vector} which we will write in a form of a formal sum of letters from $A_N$ with integer coefficients:
\begin{equation*}
    \com(w) := |w|_{\leta}\,\leta + |w|_{\letb}\,\letb +\dots = \sum_{a\in A_N}|w|_{a}\,a\,.
\end{equation*}

The following characteristics commonly used in combinatorics on words describes distribution of letters within a word.
\begin{defn}\label{def:balanced}
    A non-empty word $w\in \hA_{N}$ is balanced if for each pair of factors $u,v\subset w $ such that $|u| = |v|$ we have
\begin{equation*}
    \com(u) - \com(v) = \delta_{\leta}\,\leta + \delta_{\letb}\,\letb + \dots\quad\text{with all}\quad |\delta_a|\leqslant 1.
\end{equation*}
A circular word $[w]\in[\hA_{N}]$ is balanced if every $w^{\prime} \in [w]$ is balanced.
\end{defn}
Note that because $\com(u)-\com(v)$ compares letter contents of two words of the same length, one has $\sum_{a\in A_N}\delta_a = 0$ in the above definition. 
\begin{defn}
    For a word $w\in \cA^{\ell}_N$ its $n$-spectrum (with $1\leqslant n \leqslant \ell$) is defined as 
    \begin{equation*}
        \mathsf{spec}_n w := \left\{\com(u)\;\middle|\;\text{for all}\; u\subseteq w\;\text{with}\; |u| = n\, \right\} \,.
    \end{equation*}
    For a circular word $[w]$ its spectrum $\mathsf{spec}_n [w]$ is a union of spectra of all representatives in the class:
    \begin{equation*}
        \mathsf{spec}_n [w] := \bigcup_{w^{\prime}\in [w]} \mathsf{spec}_n w^{\prime}.
    \end{equation*}
    Spectra of cardinality $1$ will be called trivial.
\end{defn}
With the definition of spectra at hand, uniformity of distribution of letters within a word can be described by the notion of abelian complexity \cite{Pu}. 

\begin{defn}\label{def:Myhill}
     The function $\ab{n}(w)=\# \mathsf{spec}_{n}w$ (respectively, $\ab{n}[w]=\# \mathsf{spec}_{n}[w]$) with $1\leqslant n\leqslant |w|$ is called the abelian complexity. If $p$ is a minimal integer such that for all $1\leqslant n\leqslant |w|$ we have $\ab{n}(w)\leqslant p$ (respectively, $\ab{n}[w]\leqslant p$) then we will say that the word $w$ (respectively, $[w]$) is abelian-$p$-bounded. 
\end{defn}

Note that $\rho^{\text{ab}}_{1}(w) \leqslant N$ is nothing else but the number of different letters from $A_N$ entering $w$. We will focus on those words that contain all of the $N$ letters by introducing a subset $\cA_N\subset \hA_N$ such that for any $w\in \cA_N$ we have $\rho^{\text{ab}}_{1}(w) = N$. 
We denote the set of respective circular words by $[\cA_{N}]$. We introduce the following notation $\cB_N\subset \cA_N$ for balanced words and $[\cB_{N}]\subset [\cA_{N}]$ for circular balanced words. The set $\cB_N$ is closed under the action of $[G_{N}]$, but not under the whole $G_N$. As for the set $[\cB_{N}]$, it is preserved by the action of $[G_{N}]$.
\vskip 0.2cm

\begin{lemma}\label{thm:BsubM}
    Let $K_{N} = \max_{k}\begin{pmatrix}N\\k \end{pmatrix}$. For a balanced word $w\in\cB_N$ (respectively, balanced circular word $[w]\in[\cB_{N}]$) of length $\ell$ we have $\ab{n}(w)\leqslant K_{N}$ (respectively, $\ab{n}[w]\leqslant K_{N}$) for all $1\leqslant n\leqslant \ell$.
\end{lemma}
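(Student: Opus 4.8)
The plan is to read the balanced condition one letter at a time and then count lattice points inside a translated unit box. Fix a length $n$ with $1\leqslant n\leqslant\ell$. By Definition~\ref{def:balanced}, for any two factors $u,v\subset w$ with $|u|=|v|=n$ and every letter $a\in A_N$ one has $\big||u|_a-|v|_a\big|\leqslant 1$. Hence, for each fixed $a$, the integers $\{\,|u|_a : u\subset w,\ |u|=n\,\}$ have diameter at most $1$ and so occupy at most two consecutive values $\{m_a,m_a+1\}$, where $m_a:=\min_{|u|=n}|u|_a$. Equivalently, every vector of $\mathsf{spec}_n w$ can be written as $\com(u)=\sum_{a\in A_N}m_a\,a+\sum_{a\in A_N}\eta_a(u)\,a$ with coefficients $\eta_a(u)\in\{0,1\}$, i.e. it lies in the box $\prod_{a}\{m_a,m_a+1\}$.

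For the counting step I would use that all these Parikh vectors share the same coordinate sum: since $|u|=n$, summing the displayed expression over $a$ gives $\sum_a\eta_a(u)=n-\sum_a m_a=:k$, a constant independent of $u$. As the assignment $\com(u)\mapsto(\eta_a(u))_{a\in A_N}$ is injective and lands in the set of $0/1$-vectors with exactly $k$ ones, we get $\ab{n}(w)=\#\mathsf{spec}_n w\leqslant\binom{N}{k}\leqslant\max_{k}\binom{N}{k}=K_N$. This already settles the statement for words $w\in\cB_N$.

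For circular words the box argument carries over verbatim once one knows that, over all $\ell$ circular windows of length $n$, each letter count $|\cdot|_a$ still varies by at most $1$; then $\mathsf{spec}_n[w]$ sits inside the same kind of box and the same count gives $\ab{n}[w]\leqslant K_N$. The point requiring care — and the one I expect to be the main obstacle — is that two circular factors need not be linear factors of one common balanced representative, so the balancedness of individual representatives does not directly compare them; this can happen once $2n\geqslant\ell$. To get around it I would argue letter by letter and exploit the complementary-window identity: the length-$n$ window at position $i$ and the length-$(\ell-n)$ window at position $i+n$ partition the circle, so their $a$-counts sum to $|w|_a$ and the ranges of $a$-counts of length-$n$ and of length-$(\ell-n)$ windows coincide. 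Hence one may assume $m:=\min(n,\ell-n)\leqslant\ell/2$. For windows of this shorter length $m$, any two start positions are at cyclic distance at most $\ell/2\leqslant\ell-m$, so both windows occur as linear factors of a single representative of $[w]$ (which contains $\ell-m+1$ consecutive window positions), whose balancedness bounds the difference of their $a$-counts by $1$. This re-establishes the two-consecutive-values property for every letter, and the box-counting of the previous paragraph then yields $\ab{n}[w]\leqslant K_N$, completing the circular case.
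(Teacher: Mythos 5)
Your proof is correct, and it takes a route that differs from the paper's own argument in two respects. For the counting step, the paper fixes a reference factor $u$ and shows that, relative to $u$, each letter can deviate in only one direction (otherwise two other factors would differ by $2$ in that letter); this splits the alphabet into a neutral class and two signed classes of sizes $N_+$, $N_-$, and the spectrum is then bounded by choosing equal-sized subsets from the signed classes, $\sum_{j}\binom{N_+}{j}\binom{N_-}{j}=\binom{N_++N_-}{N_+}\leqslant K_N$ via Vandermonde's identity. You instead anchor the box at the per-letter minima $m_a$ and use the constant coordinate sum to inject $\mathsf{spec}_n w$ into the $k$-element subsets of $A_N$, getting $\binom{N}{k}\leqslant K_N$ with no identity needed; the paper's intermediate bound $\binom{N_++N_-}{N_+}$ is generically finer (of some interest for the paper's remark on whether $K_N$ can be improved), but both suffice for the lemma. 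The more substantial difference is the circular case, which the paper dispatches with the phrase ``treated along the same lines'': that phrase silently assumes that any two cyclic windows of equal length $n$ have letter counts differing by at most $1$, which does not follow directly from balancedness of the representatives when $n>\ell/2$, precisely because two such windows need not be factors of a common representative --- the obstacle you correctly identify. Your reduction via complementary windows to length $m=\min(n,\ell-n)\leqslant\ell/2$ (in effect the same complementation that underlies Lemma~\ref{lem:spec_spec}, which the paper states but never invokes in this proof) fills in this step explicitly, so on this point your write-up is more complete than the paper's.
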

\begin{proof}
    Note that for $w\in\cB_N$ (the case of a circular word $[w]\in[\cB_{N}]$ is treated along the same lines) for any two its factors $u,v\in w$ with $|u| = |v|$ we have
    \begin{equation}\label{eq:balanced_difference}
       \com(u) - \com(v) = a_1 + \dots + a_r - b_1 -\dots - b_r\,.
    \end{equation}
    such that all letters on the {\it rhs} are pairwise different.  Let us focus on the factors of a fixed length $n$ and choose some factor $u$ (with $|u| = n$). If for some factor $v$ one has $\com(u) - \com(v) = \varepsilon_a\,a + \dots$ with $\varepsilon = \pm 1$, then for any other factor $v^{\prime}$ one finds $\com(u) - \com(v^{\prime}) = \varepsilon_a k\,a + \dots$ with $k \in \{0,1\}$. Indeed, if $k = -1$ then $\com(v^{\prime}) - \com(v) = 2\varepsilon_a\,a + \dots$ in contradiction with the assumed balanced property. As a result, all letters in the alphabet are divided into tree classes: one of them is constituted by the letters that never appear on the {\it rhs} of \eqref{eq:balanced_difference}, while the rest is divided into two classes according to the values $\varepsilon_a$. Let the cardinalities of the latter be $N_{-}$, $N_{+}$ (without loss of generality we assume $N_{-} < N_{+}$). Because $\com(u)\in\mathsf{spec}_n w$ and any non-trivial {\it rhs} of \eqref{eq:balanced_difference} implies a contribution to the $n$-spectrum different from $\com(u)$, one has the following estimate:
    \begin{equation*}
        \#\mathsf{spec}_n w \leqslant \sum_{k=0}^{N_{+}}\begin{pmatrix} N_{+}\\ k  \end{pmatrix} \begin{pmatrix} N_{-}\\ k  \end{pmatrix} = \begin{pmatrix} N_{+} + N_{-}\\ N_{+}  \end{pmatrix}\,.
    \end{equation*}
    The above equality is due to the Vandermonde's identity. Recalling that $N_{+} + N_{-} \leqslant N$, one arrives at the highest estimate for the above bound to be $K_N$.
\end{proof}

\paragraph{Remark.} There is an interesting open question whether the bound in the above lemma can be improved or not. 
\vskip 0.2cm
 For a ternary alphabet considered in this work $K_{3} = 3$. We denote the set of abelian-$3$-bounded circular words by $[\cM_{3}]$.
\vskip 0.2cm

According to the following lemma, balanced circular words are fully classified by their primitive factors (the proof is straightforward).
\begin{lemma}\label{lem:building_blocks}
    For a primitive circular word $[w]\in [\cA_{N}]$, for any integer $p \geqslant 2$ the following assertions are equivalent:
    \begin{itemize}
        \item[1)] $[w]\in [\cA_{N}]$ is balanced,
        \item[2)] the $p$th power $[w^p]\in [\cA_{N}]$ is balanced.
    \end{itemize}
\end{lemma}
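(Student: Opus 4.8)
The plan is to pass from circular words to their linear representatives and exploit the exact periodicity of $w^p$. The first observation is that, since $w^p$ has period $\ell := |w|$, every conjugate of $w^p$ is again a perfect $p$th power: for $0\leqslant k< p\ell$ one checks $T^{k}(w^p)=(T^{k\bmod \ell}w)^p$ (shifting by a multiple of $\ell$ fixes $w^p$, and shifting by $k\bmod\ell<\ell$ commutes with taking the $p$th power). Hence the representatives of $[w^p]$ are exactly the words $w'^p$ with $w'\in[w]$, and since $w$ already contains all $N$ letters, $[w^p]$ indeed lies in $[\cA_N]$. By Definition \ref{def:balanced}, balancedness of $[w]$ (resp. $[w^p]$) thus means that $w'$ (resp. $w'^p$) is a balanced linear word for every representative $w'\in[w]$.

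The implication $2)\Rightarrow 1)$ is then immediate: if $[w^p]$ is balanced then each $w'^p$ is balanced, and $w'$ is a linear factor of $w'^p$, so its equal-length factors are factors of $w'^p$ and balancedness is inherited by $w'$; as this holds for every $w'\in[w]$, the word $[w]$ is balanced.

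For $1)\Rightarrow 2)$ I would fix a representative $w'^p$ and two linear factors $U,V$ of equal length $n$, and combine additivity of $\com$ with periodicity. Writing $n=q\ell+r$ with $0\leqslant r<\ell$, any block of $q\ell$ consecutive letters of $w'^p$ has Parikh vector $q\,\com(w)$, so $\com(U)=q\,\com(w)+\com(z_U)$ with $z_U$ the trailing cyclic factor of $[w]$ of length $r$; equivalently $\com(U)=(q{+}1)\com(w)-\com(z_U')$, where $z_U'$ is the complementary cyclic factor of length $\ell-r$. Using the first form when $r\leqslant \ell/2$ and the second otherwise, and likewise for $V$, the full periods cancel and we obtain $\com(U)-\com(V)=\pm\big(\com(z_U)-\com(z_V)\big)$, a difference of two cyclic factors of $[w]$ of one common length $s:=\min(r,\ell-r)\leqslant \ell/2$.

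It remains to bound such a difference, and this is the one genuinely delicate step, precisely because balancedness of $[w]$ is phrased per representative whereas $z_U,z_V$ are cyclic factors that may be cut by the chosen basepoint. The key remark is a counting one: two arcs of length $s\leqslant\ell/2$ together forbid at most $2(s-1)<\ell$ of the $\ell$ possible cut-points, so some conjugate $w''\in[w]$ contains both $z_U$ and $z_V$ as ordinary non-wrapping factors. Since $w''$ is balanced and $|z_U|=|z_V|=s$, Definition \ref{def:balanced} bounds every coefficient of $\com(z_U)-\com(z_V)$ by $1$ in absolute value, hence the same holds for $\com(U)-\com(V)$. As $U,V$ and $w'$ were arbitrary, every representative of $[w^p]$ is balanced, which is the claim. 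I expect the reduction to $s\leqslant\ell/2$ through the two dual decompositions, together with the basepoint count that turns a cyclic comparison into an honest comparison inside a single conjugate, to require the most care; the rest is bookkeeping with $\com$.
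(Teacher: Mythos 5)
Your proof is correct. Note that the paper does not actually supply an argument for this lemma---it is dismissed as ``straightforward''---so there is no official proof to compare against; your write-up serves as a complete proof of the omitted statement. The two points where genuine care is needed are exactly the ones you isolate. First, the identification of the representatives of $[w^p]$ as precisely the powers $w'^p$ with $w'\in[w]$ (via $T^{k}(w^p)=(T^{k\bmod\ell}w)^p$) is what makes the paper's per-representative definition of circular balance usable, and it yields $2)\Rightarrow 1)$ immediately since $w'$ is a prefix of $w'^p$. Second, in $1)\Rightarrow 2)$ the residual cyclic factors $z_U,z_V$ may a priori live in different conjugates of $w$, while Definition \ref{def:balanced} only compares factors of a single representative; your two-step fix is sound: passing to the complementary arcs guarantees a common length $s\leqslant \ell/2$, and then the cut-point count ($2(s-1)\leqslant \ell-2<\ell$ forbidden cuts) places both arcs inside one balanced conjugate $w''$. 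The reduction to $s\leqslant\ell/2$ is not cosmetic: without it, for $r$ close to $\ell$ the bound $2(r-1)$ can exceed $\ell$ and two arcs of length $r$ need not fit in a common conjugate, so your ``dual decomposition'' step is exactly what rescues the counting argument. All identities used ($\com(U)=q\,\com(w)+\com(z_U)$ and $\com(U)=(q+1)\com(w)-\com(z_U')$, the latter by covering the circle $q+1$ times) hold as stated, including the degenerate cases $r=0$ and $s\in\{0,1\}$.
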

We denote the subsets of primitive words as $[\mathfrak{b}_{N}]\subset[\cB_{N}]$ and $[\mathfrak{m}_{N}]\subset[\cM_{N}]$.
\vskip 0.2cm
As an additional point to the above general part we bring two lemmas containing useful facts concerning abelian complexity.  Firstly, abelian complexity of different spectra of a circular word appear to be related.
\begin{lemma}\label{lem:spec_spec}
    For any $[w]\in [\cA^{\ell}_{N}]$ we have
    \begin{equation*}
        \ab{n}[w] = \ab{\ell-n}[w]\quad \text{for all}\quad 1\leqslant n< \ell\,.
    \end{equation*}
\end{lemma}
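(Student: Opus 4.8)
The plan is to realise $\mathsf{spec}_n[w]$ and $\mathsf{spec}_{\ell-n}[w]$ as images of the same index set (the cyclic starting positions) and to relate them through a \emph{complementation} map that, on the level of Parikh vectors, is nothing but the affine reflection $P\mapsto\com(w)-P$. First I would fix a representative $w=a_1a_2\cdots a_\ell\in[w]$. By the definition of the spectrum of a circular word as the union of the spectra over all representatives, and since every representative is a cyclic rotation of $w$, the set $\mathsf{spec}_n[w]$ is exactly the set of Parikh vectors of the $\ell$ cyclic windows
\[
\com\big(a_i a_{i+1}\cdots a_{i+n-1}\big),\qquad i\in\mathbb{Z}/\ell\mathbb{Z},
\]
with all indices read modulo $\ell$. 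Thus each spectrum element is indexed (not necessarily injectively) by a starting position $i$.

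Next I would introduce, for each $i$, the complementary window of length $\ell-n$ beginning at position $i+n$. Traversing the circle once starting from $i$ decomposes it into the length-$n$ window at $i$ followed by the length-$(\ell-n)$ window at $i+n$, so the two Parikh vectors add up to the total content of the word:
\[
\com\big(a_i\cdots a_{i+n-1}\big)+\com\big(a_{i+n}\cdots a_{i-1}\big)=\com(w).
\]
Because the shift $i\mapsto i+n$ is a bijection of $\mathbb{Z}/\ell\mathbb{Z}$, the complementary windows, as $i$ ranges over all positions, are precisely all cyclic factors of $[w]$ of length $\ell-n$. This yields the set identity $\mathsf{spec}_{\ell-n}[w]=\{\com(w)-P\mid P\in\mathsf{spec}_n[w]\}$.

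Finally, the map $P\mapsto\com(w)-P$ on Parikh vectors is an involution, hence a bijection, so it restricts to a bijection $\mathsf{spec}_n[w]\to\mathsf{spec}_{\ell-n}[w]$ and the two sets have equal cardinality, giving $\ab{n}[w]=\ab{\ell-n}[w]$. I do not anticipate a genuine obstacle: the only delicate point is the bookkeeping that the cyclic factors of $[w]$ are faithfully captured by the windows on a single representative and that complementation is independent of the chosen representative. Both are immediate, since replacing $w$ by another element of $[w]$ merely relabels the starting positions by a fixed cyclic shift, which commutes with the entire argument.
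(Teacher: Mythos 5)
Your proof is correct and complete. Note that the paper states this lemma without supplying any proof at all, so there is no argument to compare against; your complementation argument --- identifying $\mathsf{spec}_n[w]$ with the Parikh vectors of the $\ell$ cyclic windows of one representative, observing that the window of length $n$ at position $i$ and the window of length $\ell-n$ at position $i+n$ partition the circle so that their Parikh vectors sum to $\com(w)$, and concluding that the involution $P\mapsto\com(w)-P$ restricts to a bijection $\mathsf{spec}_n[w]\to\mathsf{spec}_{\ell-n}[w]$ --- is exactly the standard justification the authors evidently had in mind. The only delicate point, which you address explicitly, is that passing to the union over all representatives in the definition of $\mathsf{spec}_n[w]$ adds nothing beyond the cyclic windows of a single representative, which is what makes the complementation well defined at the level of the circular word.
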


Another simplification about cardinalities of spectra of circular words comes in relation with their primitivity (the proof follows from \cite{CH}).
\begin{lemma}\label{lem:reducibility}
    For any $[w]\in [\cA^{\ell}_{N}]$ the two assertions are equivalent:
    \begin{itemize}
        \item[1)] there exist $1\leqslant n < \ell$ such that $\ab{n}[w]=1$, 
        \item[2)] $[w] = [u^p]$ for some $p>1$.
    \end{itemize}
\end{lemma}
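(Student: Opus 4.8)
The plan is to prove the two implications separately; both rest on a single observation, namely how the Parikh vector of a length-$n$ window changes as the window slides by one step around the circle. Fix a representative $w=a_1a_2\cdots a_\ell$ and read indices modulo $\ell$. Write $u_i:=a_ia_{i+1}\cdots a_{i+n-1}$ for the length-$n$ factor beginning at position $i$ and set $P_i:=\com(u_i)$. Taking the union over all cyclic representatives, the length-$n$ factors of $[w]$ are exactly $u_1,\dots,u_\ell$, so $\mathsf{spec}_n[w]=\{P_1,\dots,P_\ell\}$ and the condition $\ab{n}[w]=1$ is equivalent to $P_1=\cdots=P_\ell$. I would record this reformulation first, as it makes both directions transparent.

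For the implication $2)\Rightarrow 1)$ I would argue directly. If $[w]=[u^p]$ with $p>1$, put $m:=|u|=\ell/p$, so $1\le m<\ell$. By $m$-periodicity each length-$m$ window of $w$ is a cyclic shift of $u$ and hence shares its Parikh vector $\com(u)$; therefore $\mathsf{spec}_m[w]=\{\com(u)\}$, giving $\ab{m}[w]=1$, which is assertion $1)$ with $n=m$.

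For the converse $1)\Rightarrow 2)$ I would use the sliding identity. Passing from $u_i$ to $u_{i+1}$ drops the letter $a_i$ and appends $a_{i+n}$, so as formal sums of letters
\begin{equation*}
P_{i+1}-P_i=a_{i+n}-a_i\qquad(i\ \mathrm{mod}\ \ell)\,.
\end{equation*}
If $\ab{n}[w]=1$ then all $P_i$ agree, the right-hand side vanishes for every $i$, and hence $a_{i+n}=a_i$ for all $i$; equivalently, $w$ read cyclically is invariant under the shift $T^n$. The final step is to upgrade this pointwise invariance to a genuine period: invariance under $T^n$ on $\mathbb{Z}/\ell\mathbb{Z}$ forces invariance under $T^{d}$ with $d=\gcd(n,\ell)$, since the orbit of the shift by $n$ generates the subgroup $d\,\mathbb{Z}/\ell\mathbb{Z}$. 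Thus $a_{i+d}=a_i$ for all $i$, so $w=u^p$ with $u=a_1\cdots a_d$ and $p=\ell/d$; because $0<n<\ell$ we have $d\le n<\ell$ and therefore $p>1$, i.e. $[w]=[u^p]$. I expect this last passage to be the only delicate point—everything else is bookkeeping—so I would take care there to confirm that the modular relation $a_{i+n}=a_i$ yields a period that genuinely divides $\ell$; this is precisely the elementary circular periodicity fact underlying \cite{CH}.
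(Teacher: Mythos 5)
Your argument is correct, and it fills a genuine gap in the text: the paper does not write out a proof of this lemma at all, but only remarks that it follows from \cite{CH}. What you have produced is the elementary argument underlying that citation, and each step holds up. The identification $\mathsf{spec}_n[w]=\{P_1,\dots,P_\ell\}$ with the Parikh vectors of the $\ell$ cyclic windows is exactly what the union over representatives in the paper's definition of a circular spectrum yields, with $n<\ell$ guaranteeing that every window occurs as a factor of some representative. The sliding identity $P_{i+1}-P_i=a_{i+n}-a_i$ then forces $a_{i+n}=a_i$ for all $i$ once $\ab{n}[w]=1$, since distinct letters are independent in the group of formal integer sums of letters in which $\com$ takes values. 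The one genuinely delicate point, which you correctly flag and handle, is the upgrade from invariance under the shift by $n$ to invariance under the shift by $d=\gcd(n,\ell)$: by B\'ezout the shift by $d$ lies in the subgroup of $\mathbb{Z}/\ell\mathbb{Z}$ generated by the shift by $n$, and since $d$ divides $\ell$ this gives an exact power decomposition $w=u^p$ with $p=\ell/d>1$ (using $d\leqslant n<\ell$), rather than merely a local repetition. The converse direction is routine, as you say. The net effect is that your proof makes the lemma self-contained instead of deferring to the Coven--Hedlund results on block growth, which is arguably preferable here given that the lemma is invoked repeatedly in the proofs of Theorems \ref{thm:main_1} and \ref{thm:main_2}.
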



\section{Classification of balanced\\ and abelian-$3$-bounded words over $A_{3}$}\label{sec:classification}
\subsection{Balanced circular words over $A_2$}\label{sec:Christoffel_def}

Before turning to the classification of the sets $[\cB_{3}]$ and $[\cM_{3}]$ we recall some known facts about balanced words and analogous classification for $A_2$.

Two-letter alphabet $A_2$ serves as a starting point where complete classification of balanced circular words is available and given by Christoffel words. Among a number of equivalent definitions of the latter we choose the following one giving a graphical representation as a discrete approximation of a line with a rational slope.
\begin{defn}\label{def:Christoffel}
    Let $q = (M-k)\slash k$ for some positive integers $k$ and $M>k$ such that $k$ and $M-k$ are coprime. Consider a line in $\mathbb{R}^2$ parametrised as $y=q\,x$ and a path which starts at the origin $(0,0)$ and is constructed by performing consecutive unit steps $\xi=(1,0)$ and $\eta = (0,1)$ such that: i) the step $\eta$ is performed always when it does not lead to going strictly above the line, ii) the whole path intersects the line twice. Then reading the steps consecutively as $\xi\to\leta$ and $\eta\to\letb$ leads to a word $C(k,M-k)$ over $A_2$ which is referred to as Christoffel word of a slope $q$.
\end{defn}

\noindent Note that, according to the above definition, $|C(k,M-k)| = M$, $|C(k,M-k)|_{\leta} = k$, $|C(k,M-k)|_{\letb} = M-k$. We denote the set of Christoffel words (respectively, circular Christoffel words) by $\mathfrak{c}$ (respectively, $[\mathfrak{c}]$) and the set of all their powers by $\mathcal{C}$ (respectively, $[\mathcal{C}]$).

\begin{figure}[H]
    \centering
    \includegraphics[width=0.24\textwidth]{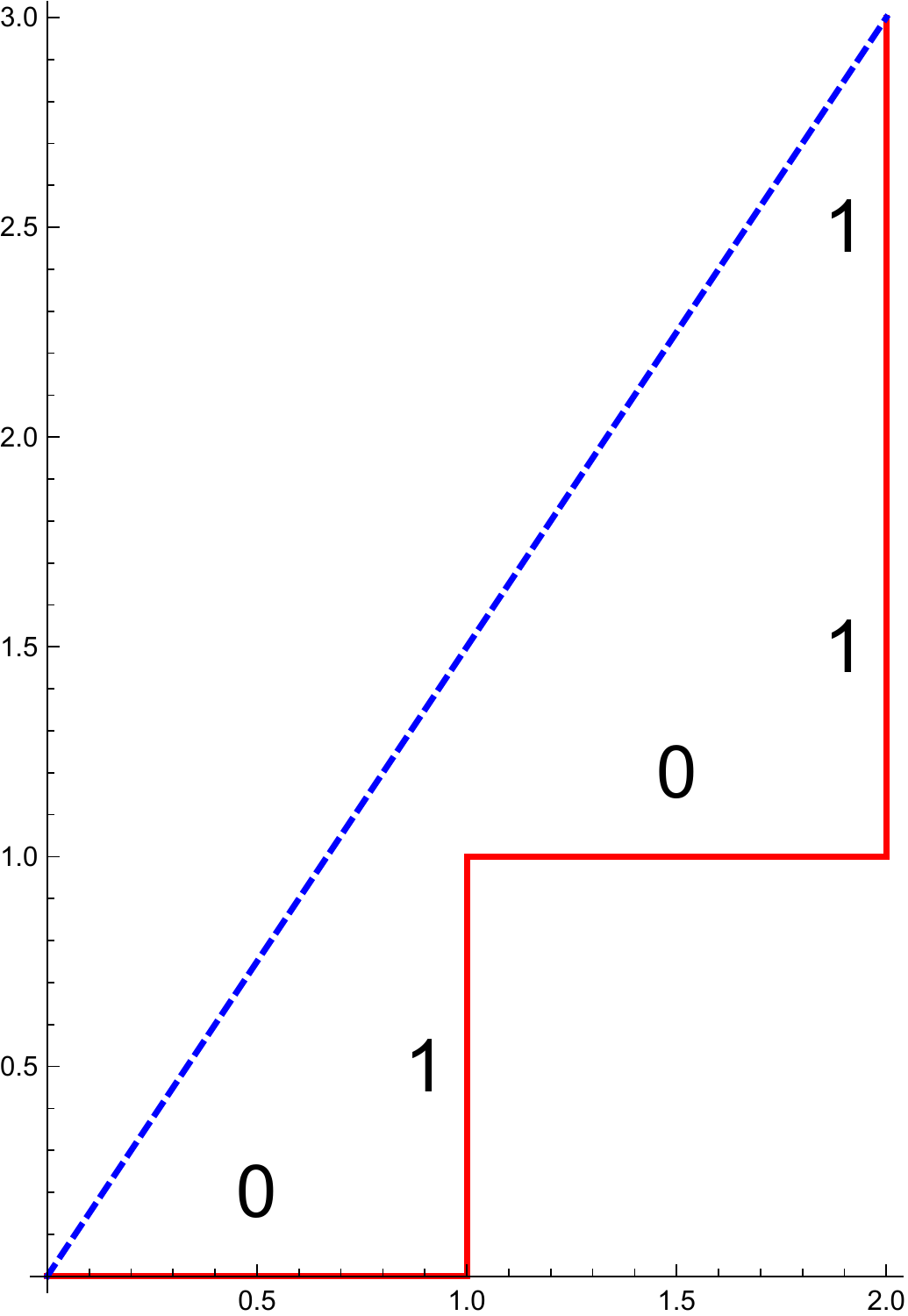}
    \hspace{0.1\textwidth}
    \includegraphics[width=0.24\textwidth]{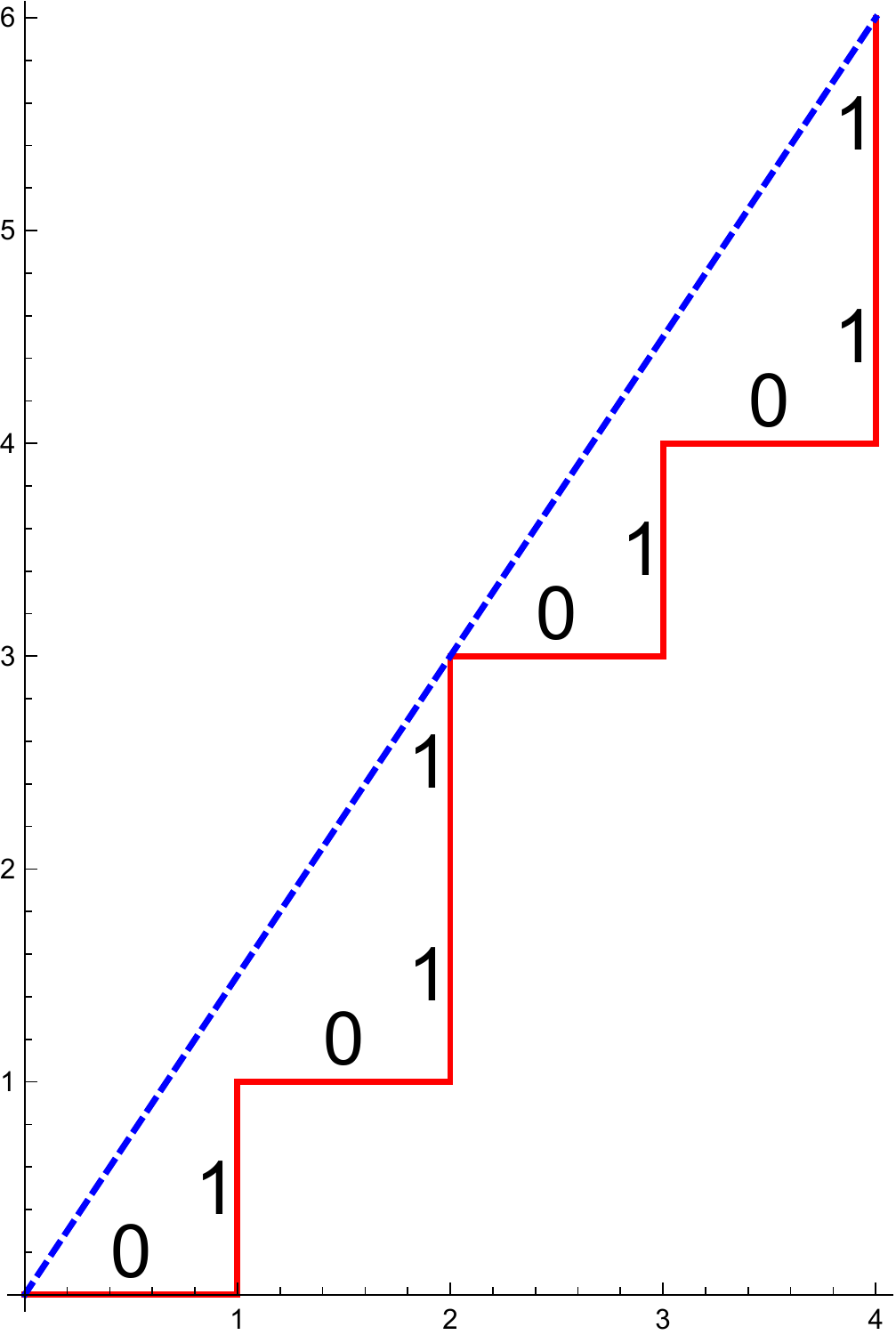}
    \caption{Graphical representation for Christoffel word $C(2,3) = \leta\letb\leta\letb\letb$ with the slope $q=3\slash 2$ and its square $C(2,3)^2 =  \leta\letb\leta\letb\letb\leta\letb\leta\letb\letb$.}
    \label{fig:my_label}
\end{figure}

The form of Christoffel words given by Definition \ref{def:Christoffel} is not preserved by $G_2=\mathfrak{S}_2\times \GPZ$.   Note that the so-defined Christoffel words are called the lower Christoffel words. Analogously, the upper Christoffel words parameterize the path that lies above the line segment and are given as inversion of lower Christoffel words. The set of lower Christoffel words is not invariant under the action of $\mathfrak{S}_2$. To show this, consider a permutation $\sigma_{(\leta\letb)}\in\mathfrak{S}_2$ such that $\leta\to\letb, \letb\to\leta$.  Then for a Christoffel word $C(2,3)= \leta\letb\leta\letb\letb$ we have $\sigma_{(\leta\letb)}\big( \leta\letb\leta\letb\letb\big)=\letb\leta\letb\leta\leta$, which does not meet the Definition \ref{def:Christoffel}. A remarkable feature of circular words $[\mathcal{C}]$ containing powers of lower Christoffel words as a representatives is that their set is closed under the action of $[G_{2}]=\mathfrak{S}_2\times\mathbb{Z}_2$. For the above example we get $[\sigma_{(\leta\letb)}\big( \leta\letb\leta\letb\letb\big)] = [\leta\leta\letb\leta\letb]$ with a Christoffel representative $C(3,2)$. In particular, the lower and upper Christoffel words are representatives of the same circular word. In the present paper we focus on circular words, and therefore working only with lower Christoffel words is sufficient for our purpose.

The following lemma shows that balanced circular words are exhausted by Christoffel representatives, see \cite{BLRS,CH}.
\begin{lemma}\label{lem:Christoffel_balanced_Myhill}
For $[w]\in [\cA_{2}]$ the following conditions are equivalent:
\begin{itemize} 
    \item[1)] there exists a Christoffel word $w^{\prime}$ such that $[w] = [w^{\prime p}]$ (for $p\geqslant 1$),
    \item[2)] $[w]$ is balanced,
    \item[3)] the abelian complexity $ \ab{n}[w]\leqslant 2$ for all $ 1\leqslant n\leqslant |w|$.
\end{itemize}
\end{lemma}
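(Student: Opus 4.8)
The plan is to prove the three conditions equivalent by the shortest available routes, establishing $(2)\Rightarrow(3)$, $(3)\Rightarrow(2)$, $(1)\Rightarrow(2)$ and $(2)\Rightarrow(1)$. The implication $(2)\Rightarrow(3)$ is immediate: it is the specialisation of Lemma \ref{thm:BsubM} to $N=2$, where $K_2=\binom{2}{1}=2$, so a balanced circular word over $A_2$ automatically satisfies $\ab{n}[w]\leqslant 2$ for all $n$.

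For $(3)\Rightarrow(2)$ I would exploit the fact that over a binary alphabet the Parikh vector of a length-$n$ factor $u$ is completely determined by the single integer $|u|_{\letb}$, since $|u|_{\leta}=n-|u|_{\letb}$. Thus $\mathsf{spec}_n[w]$ is in bijection with the set of values $\{\,|u|_{\letb}\mid u\subset[w],\ |u|=n\,\}\subset\mathbb{Z}$. The key observation is that sliding a length-$n$ window by one position around a representative of $[w]$ deletes one letter and appends one letter, each equal to $\leta$ or $\letb$, so $|u|_{\letb}$ changes by $0$ or $\pm1$; since the positions form a cycle, this function takes every integer value between its minimum and its maximum. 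Hence $\mathsf{spec}_n[w]$ corresponds to an integer interval and $\ab{n}[w]=\max-\min+1$. Therefore $\ab{n}[w]\leqslant 2$ forces $\max-\min\leqslant 1$ at every length $n$, which over all $n$ is precisely the assertion that any two equal-length factors of $[w]$ differ by at most one in each letter count, i.e. that $[w]$ is balanced.

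For $(1)\Rightarrow(2)$ I would first check that a single Christoffel word $C(k,M-k)$ is balanced directly from Definition \ref{def:Christoffel}: a length-$n$ factor corresponds to a connected sub-path of the staircase, and since the whole staircase stays within unit vertical distance of the line $y=qx$, any two sub-paths spanning the same total number $n$ of steps can contain numbers of vertical steps $\eta$ (hence of letters $\letb$) differing by at most $1$. The extension from $C(k,M-k)$ to its powers $[C(k,M-k)^p]$ is then handed to Lemma \ref{lem:building_blocks}, which equates balancedness of a primitive circular word with that of any of its powers. Combined with $(2)\Rightarrow(3)$ this already yields $(1)\Rightarrow(3)$.

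The substantial direction is $(2)\Rightarrow(1)$, where I expect the real work to lie. I would reduce to the primitive case by writing $[w]=[u^p]$ with $[u]$ the unique primitive root; by Lemma \ref{lem:building_blocks} the root $[u]$ is again balanced, so it suffices to show that a \emph{primitive} balanced circular word over $A_2$ contains a Christoffel representative. Here one uses that for balanced words primitivity of $[u]$ is equivalent to coprimality of its letter counts $k=|u|_{\leta}$ and $M-k=|u|_{\letb}$ (a common factor exhibits $[u]$ as a proper power, detectable through Lemma \ref{lem:reducibility}), so that the data $q=(M-k)/k$ satisfies the hypotheses of Definition \ref{def:Christoffel}. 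The heart of the matter — and the main obstacle — is then the classical rigidity statement that there is, up to conjugacy, a \emph{unique} primitive balanced binary word with prescribed coprime letter counts, and that it is $C(k,M-k)$; I would either reconstruct the staircase of $u$ and show it coincides with the best lower approximation of the line of slope $q$, or invoke this uniqueness result from \cite{BLRS,CH}. This rigidity step is the only genuinely non-elementary ingredient, the remaining implications being either immediate from earlier lemmas or consequences of the elementary interval argument above.
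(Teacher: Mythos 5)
First, a point of reference: the paper does not prove this lemma at all --- it is stated as a known classification and delegated to \cite{BLRS,CH} --- so your proposal is competing with the cited literature rather than with an argument in the text. Within it, your treatment of $(2)\Leftrightarrow(3)$ is correct and self-contained: $(2)\Rightarrow(3)$ is indeed just Lemma \ref{thm:BsubM} at $N=2$ with $K_2=2$, and the sliding-window argument for $(3)\Rightarrow(2)$ is sound, because over $A_2$ the $n$-spectrum is an integer interval in the coordinate $|u|_{\letb}$, so cardinality $\leqslant 2$ is literally the balance condition. Your $(2)\Rightarrow(1)$ is, by your own account, a reduction to the classical rigidity statement (uniqueness of the primitive balanced word with prescribed coprime counts), which you cite from \cite{BLRS,CH}; that is legitimate, since it is exactly what the paper does for the whole lemma. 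Note, though, that your parenthetical claim that non-coprime counts force a proper power ``detectable through Lemma \ref{lem:reducibility}'' needs one extra observation to close: if $|u|_{\leta}=dk'$, $|u|_{\letb}=d(M'-k')$ with $d>1$, balance makes the $\letb$-counts of cyclic windows of length $M'$ take at most two consecutive values, while their cyclic average equals the integer $M'-k'$; hence all counts coincide, $\mathsf{spec}_{M'}[u]$ is trivial, and only then does Lemma \ref{lem:reducibility} yield a proper power.

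The genuine gap is in $(1)\Rightarrow(2)$, the one implication you attempt from scratch. The premise that ``the whole staircase stays within unit vertical distance of the line $y=qx$'' is false for the lower Christoffel path of Definition \ref{def:Christoffel}: on the path of $C(2,3)$ (the paper's own Figure 1, $q=3/2$) the lattice point $(1,0)$ lies at vertical distance $3/2$ from the line, and for $C(1,n)$ that same point lies at distance $n$. So the claimed bound on $\eta$-counts of equal-length sub-paths does not follow as stated. The correct version measures the deviation along anti-diagonals rather than vertically: one checks from the greedy definition that after $j$ steps the path height is $y_j=\lfloor j\alpha\rfloor$ with $\alpha=(M-k)/M$, and that this formula persists for the $M$-periodic extension of the path --- which you need anyway, since balance of the \emph{circular} word concerns all conjugates of $C(k,M-k)^p$, not only factors of the single staircase. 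Then the number of $\letb$'s in any window of $n$ consecutive steps equals $\lfloor (i+n)\alpha\rfloor-\lfloor i\alpha\rfloor\in\{\lfloor n\alpha\rfloor,\lfloor n\alpha\rfloor+1\}$, and balance of every representative of $[C(k,M-k)^p]$ follows at once (making the appeal to Lemma \ref{lem:building_blocks} unnecessary). As written, however, the geometric justification fails, and this step must either be repaired along these lines or referred to \cite{BLRS} like the rest.
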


\vskip 0.2cm
We note the following well-known property of the Christoffel words which we will use hereafter, see \cite{BLRS}. Let $w\in\mathfrak{c}$ be a Christoffel word, then $w=\leta Q\letb$, where $Q$ is a {\it palindrome}\footnote{Palindromes are inversion-invariant words, {\it i.e.} those satisfying $I(Q) = Q$.}.
\begin{lemma}\label{lem:palindrome}
    A Christoffel word $w$ is the unique representative in the class $[w]$ which has the form $w=\leta Q\letb$ where $Q$ is a palindrome.
\end{lemma}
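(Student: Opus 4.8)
\noindent The statement has two parts: existence of a decomposition $w=\leta Q\letb$ with $Q$ a palindrome, and uniqueness of such a representative inside $[w]$. Existence is exactly the structural property of Christoffel words recalled in the sentence preceding the lemma (and established in \cite{BLRS}), so the plan is to prove only uniqueness. I would argue by contradiction: assume $[w]$ has two \emph{distinct} conjugate representatives of the required form, say $u=\leta P\letb$ and $u'=\leta R\letb$ with $P,R$ palindromes, and derive a contradiction with primitivity of the Christoffel word $w$.

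First I would exploit conjugacy directly. Since $u$ and $u'$ are conjugate and distinct, there are nonempty factors $\alpha,\beta$ with $u=\alpha\beta$ and $u'=\beta\alpha$. Reading off the first and last letters of $u$ and of $u'$ forces both $\alpha$ and $\beta$ to begin with $\leta$ and end with $\letb$, so $\alpha=\leta A\letb$ and $\beta=\leta B\letb$ for some (possibly empty) words $A,B$, and the palindrome hypotheses become the word equations $I(P)=P$ and $I(R)=R$ for $P=A\letb\leta B$ and $R=B\letb\leta A$. Writing out $I(P)=P$ as $I(B)\,\leta\letb\,I(A)=A\,\letb\leta\,B$ and comparing the central pair (an $\leta\letb$ at position $|B|$ on the left against a $\letb\leta$ at position $|A|$ on the right) shows the two sides can match only if $|A|\neq|B|$. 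Thus any such pair of representatives must be \emph{length‑unbalanced} in this decomposition, which is the lever for the final step.

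The decisive remaining input is that every conjugate of a Christoffel word is balanced (Lemma \ref{lem:Christoffel_balanced_Myhill}) and that $w$ is primitive, so by Lemma \ref{lem:reducibility} no proper‑length spectrum of $[w]$ is trivial. The aim is to show that the equations for $P$ and $R$, once the strict constraint $|\delta_a|\leqslant 1$ of Definition \ref{def:balanced} is imposed on \emph{all} equal‑length factors, force the power $T^{m}$ of $\GZ$ relating $u$ to $u'$ to be a nontrivial rotational symmetry of the circular word, i.e.\ $[w]=[v^{p}]$ for some $p>1$, contradicting primitivity. Conceptually this is the assertion that a balanced (Sturmian) coding has a \emph{unique symmetric phase}: viewing the $\letb$‑positions of $[w]$ as a Beatty sequence, a palindromic centre pins the phase uniquely, so only one conjugate can be centrally symmetric after deleting its extremal $\leta$ and $\letb$.

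I expect this rigidity step to be the main obstacle. Primitivity alone does \emph{not} suffice: the primitive word $\leta\leta\letb\letb\leta\letb$ genuinely has two conjugates of the form $\leta(\text{palindrome})\letb$ (namely itself and $\leta\letb\leta\leta\letb\letb$), and it fails to be balanced — so balancedness must enter in an essential way, and the difficulty is precisely to convert the palindrome/word‑equation data of the previous paragraph into the periodicity that primitivity forbids. An alternative, and possibly cleaner, route is the geometric one anticipating Section \ref{sec:geometry}: the representative $\leta Q\letb$ corresponds to the unique lattice‑point start for which the length‑$(M-2)$ trimmed staircase under the line $y=qx$ is centrally symmetric, and I would instead prove uniqueness of that symmetry centre within one period of the staircase.
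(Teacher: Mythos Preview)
The paper does not actually prove Lemma~\ref{lem:palindrome}: it is stated as a known fact with a reference to \cite{BLRS}, so there is no ``paper's own proof'' to compare against beyond that citation.

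That said, your proposal is not a proof but a plan with the decisive step left open. The conjugacy set--up is fine: writing $u=\alpha\beta$, $u'=\beta\alpha$ and deducing $\alpha=\leta A\letb$, $\beta=\leta B\letb$ is correct, and your observation that $|A|\neq|B|$ follows from the palindrome equations is valid. Your counterexample $\leta\leta\letb\letb\leta\letb$ is well chosen and shows that balancedness must enter essentially. But the ``rigidity step'' --- turning two palindromic conjugates plus balancedness into a periodicity contradicting primitivity --- is precisely the content of the lemma, and you only describe two possible routes (word equations versus the geometric symmetry centre) without carrying either to a conclusion. As written, the proposal stops exactly where the real work begins.

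If you want to close the gap rather than cite \cite{BLRS}, the most economical route is not through your word equations but through the characterisation already available in the paper: by Definition~\ref{def:Christoffel} there is exactly one lower Christoffel word with a given Parikh vector $(k,M-k)$, $\gcd(k,M-k)=1$; it is the Lyndon (lexicographically least) representative of its conjugacy class, hence unique there. The substantive point then becomes that any \emph{balanced} word of the form $\leta Q\letb$ with $Q$ a palindrome is in fact the lower Christoffel word of its slope (equivalently, $Q$ is a central word). This is a standard Sturmian fact and is what \cite{BLRS} supplies; proving it from scratch is comparable in effort to your proposed rigidity step, but at least it is a well--trodden path with a known endpoint, unlike the periodicity argument you sketch.
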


\subsection{Classification of balanced circular words over $A_3$}\label{sec:balanced_for_A3}

Let $\mathcal{C}^{\prime}\subset \mathcal{C}$ be a subset of powers of Christoffel words with even total number of $\leta$, and let a subset $\mathfrak{c}^{\prime}\subset\mathcal{C}^{\prime}$ be constituted by words of the form $C(k,M-k)^{p}$ with $p = 1$ for $k$ even and $p=2$ for $k$ odd. Along  the  lines  of \cite{Gr} consider a map $\phi:\mathcal{C}^{\prime}\to \mathcal{A}_{3}$ substituting each $\letb$ by $\letc$ and each {\it even} $\leta$ by $\letb$. For example, $\phi(\leta\leta\letb) = \leta\letb\letc$ and $\phi(\leta\letb\leta\letb) = \leta\letc\letb\letc$. Note that since any word in $\mathcal{C}^{\prime}$ necessarily contains at least one symbol $\letb$ and at least two symbols $\leta$, any $\phi$-image is indeed in $\cA_{3}$. The map $\phi$ respects powers of elements from $\mathcal{C}^{\prime}$: for any $c\in\mathcal{C}^{\prime}$ we have $\phi(c^p) = \phi(c)^p$.

\paragraph{Remark.} Alternatively, one could consider the set $\mathcal{C}^{\prime\prime}\subset \mathcal{C}$ of Christoffel words with even total amount of letters $\letb$ and consider a map $\phi^\prime:\mathcal{C}^{\prime\prime}\to \mathcal{A}_{3}$ substituting each even $\letb$ by $\letc$. Let us show that modulo $[G_{2}]$-action this choice is equivalent to considering the initially proposed set $\mathcal{C}^{\prime}$ and map $\phi$. As was noted in Section \ref{sec:Christoffel_def} the set of circular words $[\mathcal{C}]$ is closed under the action of $[G_{2}]$, in particular we have $\sigma_{(\leta\letb)}\big[\mathcal{C}^{\prime\prime}\big]=\big[\mathcal{C}^{\prime}\big]$, $\sigma_{(\leta\letb)}\in\mathfrak{S}_2$. Let $\sigma_{(\leta\letc\letb)}\in\mathfrak{S}_3$ be the permutation $\leta\to\letc,\,\letb\to\leta,\,\letc\to\letb$. It is straightforward to verify that $\sigma_{(\leta\letc\letb)}\big[\phi(\mathcal{C}^{\prime\prime})\big]=\big[\phi(\mathcal{C}^{\prime})\big]$, therefore $\mathfrak{S}_3\big[\phi(\mathcal{C}^{\prime\prime})\big]=\mathfrak{S}_3\big[\phi(\mathcal{C}^{\prime})\big]$.
\vskip 0.2cm

As a part of a classification of circular balanced words over $A_N$, the celebrated Fraenkel's conjecture says that for $N\geqslant 3$ there is a unique, up to $\mathfrak{S}_{N}$-action, primitive circular balanced word $\mathbf{F}_N$ (Fraenkel word) with pairwise distinct amounts of letters. It is constructed inductively as $\mathbf{F}_N = \mathbf{F}_{N-1}\iota(N)\mathbf{F}_{N-1}$ starting from $\mathbf{F}_1 = [\leta]$ and  has $\com(\mathbf{F}_N) = \sum_{j=0}^{N-1} 2^{j}\iota(j)$. For $N=3,\dots ,7$ the Fraenkel's conjecture is proven to hold, see \cite{AGH,Si,T}. Note that Fraenkel words are inversion-invariant.

Denote the set of $\mathfrak{S}_{3}$-images of the circular Fraenkel word over $A_3$ as $[\mathfrak{f}_{3}]$ and let $[\cF_{3}]$ stand for all their powers. The following theorem completely describes the set $[\cB_{3}]$ (for the proof of the first part of the assertion see \cite{Gr,BCDJL}).

\begin{theorem}\label{thm:main_1}
\begin{itemize}
    \item[1)] Balanced circular words over $A_3$ are given as
    \begin{equation*}
        [\cB_{3}] = \mathfrak{S}_3\big[\phi(\mathcal{C}^{\prime})\big]\sqcup [\mathcal{F}_{3}].
    \end{equation*}
    \item[2)] Primitive balanced circular words over $A_{3}$ are given as
    \begin{equation*}
        [\mathfrak{b}_{3}] =\mathfrak{S}_3[\phi(\mathfrak{c}^{\prime})] \sqcup[\mathfrak{f}_{3}] .
    \end{equation*}
\noindent
The upper bound $K_3$ is achived by all non-trivial spectra of $[\mathfrak{f}_{3}]$ and $\mathfrak{S}_3\big[\phi(C(k,M-k))\big]$ with $k$ even.
\end{itemize}

\end{theorem}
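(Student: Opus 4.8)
The plan is to handle the three assertions in turn, importing the exhaustion statement of part~1) from \cite{Gr,BCDJL} and bootstrapping from it both the primitive classification and the sharpness of the bound.

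For part~1) the inclusion $\mathfrak{S}_3\big[\phi(\mathcal{C}^{\prime})\big]\sqcup[\mathcal{F}_{3}]\subseteq[\cB_{3}]$ is the easy direction: the map $\phi$ is an instance of the inductive splitting procedure recalled in the introduction (recolouring the even occurrences of $\leta$ by a fresh letter), which sends balanced circular words to balanced circular words, so that $[\phi(\mathcal{C}^{\prime})]\subseteq[\cB_{3}]$ by \cite{Gr}; the Fraenkel word and its powers are balanced by construction (see \cite{AGH}); and balancedness is preserved by $\mathfrak{S}_3$. The reverse inclusion --- that these two families exhaust $[\cB_{3}]$ --- is exactly the content of \cite{Gr,BCDJL}, which I would cite. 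It then remains to justify that the union is disjoint, and for this I would use the multiset $\{|w|_{\leta},|w|_{\letb},|w|_{\letc}\}$ as an $\mathfrak{S}_3$-invariant: every element of $\mathfrak{S}_3\big[\phi(\mathcal{C}^{\prime})\big]$ has two equal counts, since $\phi$ distributes the $\leta$'s of a word in $\mathcal{C}^{\prime}$ evenly between $\leta$ and $\letb$, whereas every power of the Fraenkel word has the pairwise distinct counts $(p,2p,4p)$; hence the two sets cannot meet.

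For part~2) I would restrict part~1) to primitive classes, using that $[\mathfrak{b}_{3}]$ consists precisely of the primitive elements of $[\cB_{3}]$ and that, by Lemma~\ref{lem:building_blocks}, balancedness depends only on the primitive root. Within $[\mathcal{F}_{3}]$ the primitive members are the first powers, i.e.\ $[\mathfrak{f}_{3}]$. Within $\mathfrak{S}_3\big[\phi(\mathcal{C}^{\prime})\big]$ I would use that $\phi$ commutes with powers ($\phi(c^{p})=\phi(c)^{p}$ for $c\in\mathcal{C}^{\prime}$) and that $\mathfrak{S}_3$ commutes with powers and preserves primitivity, so that it suffices to identify the primitive roots among the $\phi$-images. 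Writing a general element of $\mathcal{C}^{\prime}$ as $C(k,M-k)^{r}$ with $rk$ even, there are two cases: if $k$ is even it is a power of $C(k,M-k)\in\mathfrak{c}^{\prime}$, and if $k$ is odd then $r$ is even and it is a power of $C(k,M-k)^{2}\in\mathfrak{c}^{\prime}$. One must then check that the proposed roots $\phi(\mathfrak{c}^{\prime})$ are genuinely primitive. For $k$ even this follows from the retraction $\psi\colon\leta,\letb\mapsto\leta,\ \letc\mapsto\letb$, which satisfies $\psi\circ\phi=\mathrm{id}$ on $C(k,M-k)$: a nontrivial power of $\phi(C(k,M-k))$ would project under $\psi$ to a nontrivial power of the primitive Christoffel word $C(k,M-k)$, which is impossible. \emph{The delicate point, where I expect to spend the most care, is the case $k$ odd}, in which $\phi(C(k,M-k)^{2})$ must be shown primitive even though $C(k,M-k)^{2}$ is not: here the even/odd bookkeeping of $\phi$ colours the $a$-th and the $(k+a)$-th occurrences of $\leta$ with opposite parities (because $k$ is odd), so that $\phi(C(k,M-k)^{2})$ cannot have period $M$ and hence, by the same $\psi$-projection, admits no nontrivial power. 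Disjointness is inherited from part~1).

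For the final sharpness statement I would first note that, since $\mathfrak{S}_3$ permutes letters and therefore preserves cardinalities of spectra, it suffices to treat one representative, namely $\phi(C(k,M-k))$ with $k$ even (length $M$, counts $(k/2,k/2,M-k)$) and the Fraenkel word $\mathbf{F}_3$ (length $7$, counts $(1,2,4)$). The key observation is an integrality argument: because $\gcd(k,M-k)=1$ forces $k$ and $M-k$, and hence $k/2$ and $M-k$, to be coprime to $M$, each of the three letter-counts $c_x$ is coprime to the length. Summing the occurrences of a fixed letter $x$ over all $M$ cyclic windows of length $n$ gives $n\,c_x$, so the average per window is $n\,c_x/M$, which for $1\leqslant n<M$ is never an integer; hence the window-count of \emph{every} letter is non-constant. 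By Definition~\ref{def:balanced} a non-constant letter then takes exactly the two values $m_x$ and $m_x+1$, so each window's Parikh vector equals $m_{\leta}\,\leta+m_{\letb}\,\letb+m_{\letc}\,\letc$ shifted by $\varepsilon_{\leta}\,\leta+\varepsilon_{\letb}\,\letb+\varepsilon_{\letc}\,\letc$ with $\varepsilon_x\in\{0,1\}$ and coordinate-sum the constant $s=n-(m_{\leta}+m_{\letb}+m_{\letc})\in\{1,2\}$; since each letter attains both of its values, all three admissible shifts occur and $\ab{n}[w]=3$. As the spectra are non-trivial exactly for $1\leqslant n<M$ (resp.\ $1\leqslant n\leqslant 6$), this shows that $K_3=3$ is attained by every non-trivial spectrum. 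The same coprimality with the length fails for the odd-$k$ roots $\phi(C(k,M-k)^{2})$ and for proper powers, which is consistent with the statement restricting the claim to $[\mathfrak{f}_{3}]$ and to even $k$.
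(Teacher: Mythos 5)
Your proposal is correct, and while it imports the exhaustion statement of part 1) from \cite{Gr,BCDJL} exactly as the paper does, your treatments of primitivity and of sharpness take a genuinely different route from the paper's appendix proof. For primitivity the paper computes $n$-spectra of the $\phi$-images and invokes Lemma~\ref{lem:reducibility} (a circular word is a proper power iff some spectrum is trivial): for $k$ even the spectra are visibly non-trivial, and for $k$ odd the point is that $\phi$ splits the trivial $M$-spectrum of $[C(k,M-k)^2]$ into two elements. Your route via the retraction $\psi$ with $\psi\circ\phi=\mathrm{id}$ (which reduces any alleged power of $\phi(C(k,M-k)^2)$ to a square of period $M$, killed by the parity bookkeeping since $k$ is odd) is more elementary and avoids spectra altogether; the paper's choice has the side benefit of recording the spectra for reuse, e.g.\ the observation that the $M$-spectrum in the odd-$k$ case has cardinality $2$, which explains why the saturation claim excludes odd $k$ --- a fact your closing remark recovers independently from the failure of coprimality. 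For sharpness the paper argues by contradiction: a deficient spectrum would force every $\leta$ at an odd position in $C(k,M-k)^2$ to be echoed $n-1$ steps later, and the palindrome form $\leta Q\letb$ of Lemma~\ref{lem:palindrome}, with $k-1$ odd, then yields a positional-parity contradiction. Your double-counting argument --- each letter count is coprime to the length, so no window count is constant, and balancedness then forces all three admissible shift vectors of fixed coordinate sum $s\in\{1,2\}$ to occur --- is sound (the coprimality checks $\gcd(k/2,M)=\gcd(M-k,M)=1$ do hold) and is strictly more general: it shows that \emph{any} balanced circular ternary word whose three letter counts are coprime to its length has abelian complexity exactly $3$ at every non-trivial scale, and it covers $[\mathfrak{f}_{3}]$ (counts $1,2,4$, length $7$) uniformly, whereas the paper's appendix proves saturation only for the even-$k$ Christoffel images and leaves the Fraenkel case as an implicit finite check. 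Finally, your explicit disjointness argument via the multiset of letter counts, and the reduction of part 2) to primitive roots using $\phi(c^p)=\phi(c)^p$ together with Lemma~\ref{lem:building_blocks}, supply details the paper leaves unstated; both are correct.
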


\subsection{Classification of circular abelian-$3$-bounded words over $A_3$}

In order to classify circular words with abelian complexity $\leqslant 3$ we define the {\it twisted words} constructed from powers of Cristoffel words as follows. Recall that any Christoffel word is of the form $\leta Q\letb$. For powers of $\leta Q \letb$ we define twisted words with interchanging $\letb\leta\to\leta\letb$ at (some of) the borders of primitive factors. For example from $C(2,1)^3 = \leta\leta\letb\,\leta\leta\letb\,\leta\leta\letb$ one can construct three twisted words: $\leta\leta\leta\,\letb\leta\letb\,\leta\leta\letb$, $\leta\leta\letb\,\leta\leta\leta\,\letb\leta\letb$ and $\leta\leta\leta\,\letb\leta\leta\,\letb\leta\letb$. We denote by $\mathcal{C}^{\text{tw}}$ the set of twisted words constructed from the set $\mathcal{C}^{\prime}$ of powers of Christoffel words with even number of zeros. 

For the following circular word $[\leta\letb\letc\letb\leta]$ denote its $\mathfrak{S}_3$-image by $[\mathfrak{d}_{3}]$ and all their powers by $[\mathcal{D}_{3}]$.

\begin{theorem}\label{thm:main_2}
For $A_{3}$, the set of circular abelian-$3$-bounded words is 
\begin{equation*}
    [\cM_{3}] = [\mathcal{B}_{3}]\sqcup \mathfrak{S}_3\big[\phi(\mathcal{C}^{\text{{\normalfont tw}}})\big]\sqcup[\mathcal{D}_{3}].
\end{equation*}
\end{theorem}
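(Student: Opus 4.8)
The plan is to prove the two inclusions $[\cM_3]\supseteq(\text{RHS})$ and $[\cM_3]\subseteq(\text{RHS})$ separately, and then check that the three pieces are pairwise disjoint. The inclusion $\supseteq$ is the easier direction. First, $[\cB_3]\subseteq[\cM_3]$ is immediate from Lemma \ref{thm:BsubM}, since $K_3=3$. For $[\mathcal{D}_3]$ it suffices, thanks to two reductions, to inspect the single representative $[\leta\letb\letc\letb\leta]$: a direct computation of $\mathsf{spec}_n$ for $n=1,2$ (and then $n=3,4$ by Lemma \ref{lem:spec_spec}) shows every non-trivial spectrum has cardinality $3$. The two reductions are (i) that $\ab{n}$ is invariant under $\mathfrak{S}_3$, as permuting letters is a bijection on spectra, and (ii) that abelian-$p$-boundedness passes between a primitive $[u]$ and all of its powers. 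I would record (ii) as a short lemma: for primitive $[u]$ of length $\ell$ and $n=q\ell+r$ with $0\leqslant r<\ell$, one has $\mathsf{spec}_n[u^k]=q\,\com(u)+\mathsf{spec}_r[u]$, a translate, so $\ab{n}[u^k]=\ab{r}[u]$ (and $=1$ when $r=0$); hence $\max_n\ab{n}[u^k]=\max_{1\leqslant r<\ell}\ab{r}[u]$. The family $\mathfrak{S}_3[\phi(\mathcal{C}^{\mathrm{tw}})]$ is the only genuinely computational case of this direction: I would analyse how a single twist $\letb\leta\to\leta\letb$ at a block border perturbs each $\mathsf{spec}_n$ of a Christoffel power and verify the perturbed spectra still have cardinality $\leqslant 3$. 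Care is needed with degenerate Christoffel words whose palindromic part $Q$ is empty, where a twist can create an adjacent equal pair and push some $\ab{n}$ above $3$; such twists must be excluded from the admissible ones.

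The inclusion $\subseteq$ is the heart of the theorem. Using Lemma \ref{lem:reducibility} together with reduction (ii), I would reduce to classifying the \emph{primitive} abelian-$3$-bounded words $[w]\in[\mathfrak{m}_3]$ and then close under powers. Given such a $[w]$, I split into two cases. If $[w]$ is balanced, then $[w]\in[\cB_3]$ by definition, and Theorem \ref{thm:main_1} places it in $\mathfrak{S}_3[\phi(\mathcal{C}^{\prime})]\sqcup[\mathcal{F}_3]$. If $[w]$ is not balanced, there is a shortest length $m$ at which two factors $u,v$ satisfy $\com(u)-\com(v)=2a+\dots$ for some letter $a$; minimality of $m$ combined with the constraint $\ab{n}[w]\leqslant 3$ for all $n$ forces this imbalance to be highly localised. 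The strategy is then to extract a balanced binary skeleton: I would prove that at least one of the three letter-merging projections $\pi_{ab},\pi_{ac},\pi_{bc}$ sends $[w]$ to an abelian-$2$-bounded, hence (by Lemma \ref{lem:Christoffel_balanced_Myhill}) Christoffel, circular word. The ternary word is recovered from this skeleton by a two-colouring of the merged positions, and the bound $\ab{n}\leqslant 3$ restricts the admissible colourings to exactly three patterns: the alternating colouring of an untwisted skeleton, reproducing $\phi(\mathcal{C}^{\prime})$; the alternating colouring of a twisted skeleton, reproducing $\phi(\mathcal{C}^{\mathrm{tw}})$; and the exceptional pentagon pattern, reproducing $[\mathcal{D}_3]$.

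The main obstacle is precisely this structural step: proving that a non-balanced abelian-$3$-bounded word admits a balanced projection, and that $\ab{n}\leqslant 3$ then leaves only the three colouring patterns. I expect this to require a careful propagation argument, tracking how the localised imbalance at length $m$ constrains factors of neighbouring lengths through Lemma \ref{lem:spec_spec} and the normal form of Lemma \ref{lem:palindrome}; the word $[\leta\letb\letc\letb\leta]$ should emerge as the unique short obstruction that is not a twist. For disjointness, the piece $[\cB_3]$ is separated from the other two by the balanced property itself, which I would confirm fails for every element of $\mathfrak{S}_3[\phi(\mathcal{C}^{\mathrm{tw}})]$ and of $[\mathcal{D}_3]$ by exhibiting a witnessing pair of equal-length factors with $\com$-values differing by $2a-2c$. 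To separate $\mathfrak{S}_3[\phi(\mathcal{C}^{\mathrm{tw}})]$ from $[\mathcal{D}_3]$, neither length nor Parikh vector suffices (the twists of $C(4,1)^p$ share both with $[\leta\letb\letc\letb\leta]^p$), so I would use a finer invariant read off the $2$-spectrum, namely the combinatorial adjacency type of the length-$2$ factors (which doubled letter occurs and which mixed pairs are present); modulo $\mathfrak{S}_3$ this type takes distinct values on the two families, so no circular word can lie in both.
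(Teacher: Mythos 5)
Your treatment of the hard inclusion $[\cM_{3}]\subseteq[\cB_{3}]\sqcup\mathfrak{S}_3[\phi(\mathcal{C}^{\mathrm{tw}})]\sqcup[\mathcal{D}_{3}]$ is a plan, not a proof, and this inclusion is where essentially all the content of the theorem sits. Both pillars of your plan --- (a) every non-balanced word of $[\cM_{3}]$ admits a letter-merging projection that is abelian-$2$-bounded, and (b) the bound $\ab{n}\leqslant 3$ then leaves exactly three colourings of the merged positions --- are announced (``I would prove'', ``I expect this to require a careful propagation argument'') but never established. Worse, the plan is internally inconsistent on the decisive case: for $[w]=[\phi(T)]$ with $T$ a twisted word, merging $\leta$ with $\letb$ returns a copy of $T$ itself, and twisted Christoffel powers are \emph{not} balanced (the twist of $C(2,1)^3$ contains both $\leta\leta\leta$ and $\letb\leta\letb$), so your ``alternating colouring of a twisted skeleton'' cannot arise if skeletons are Christoffel, as (a) demands; the balanced projection of $\phi(T)$ is in fact the merge of $\leta$ with $\letc$, which yields an \emph{untwisted} Christoffel skeleton, the twist reappearing as a defect of the alternating colouring. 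By contrast, the paper spends three of its four steps exactly here: it takes the minimal non-balanced scale $\ell$, shows every imbalance is realised by an elementary factor $f_m^{(\ell)}(Q)=cQc\,aQc\cdots aQc\,aQa$ (the degenerate case $Q=\varepsilon$ producing precisely $[\leta\letb\letc\letb\leta]$, i.e.\ $[\mathcal{D}_{3}]$), proves $Q$ is a universal palindrome with $cQa$ a $\sigma\circ\phi$-image of a Christoffel word with an \emph{even} number of $\leta$'s, and then shows how consecutive elementary factors merge, forcing $[w]=[\sigma\circ\phi(T)]$ with $T\in\mathcal{C}^{\mathrm{tw}}$. Filling your gaps (a) and (b) would require the same kind of local spectral analysis, so the projection/colouring detour does not obviously save work; as it stands it renames the difficulty rather than resolving it.

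On the easier direction your reductions are sound (Lemma \ref{thm:BsubM}, the translation formula $\mathsf{spec}_n[u^k]=q\,\com(u)+\mathsf{spec}_r[u]$ for powers of a primitive word, the direct check of $[\leta\letb\letc\letb\leta]$), and they match the paper's use of Lemmas \ref{thm:BsubM} and \ref{lem:building_blocks} together with its Step 4. Your caveat about degenerate twists is genuinely sharp: taken literally, the paper's $\mathcal{C}^{\mathrm{tw}}$ is built from all of $\mathcal{C}'$, which contains even powers of Christoffel words with an odd number of $\leta$'s, and those twists do violate the bound --- the twist $\leta\leta\letb\letb$ of $C(1,1)^2$ has $\phi$-image $[\leta\letb\letc\letc]$ with $\ab{2}=4$. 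But your diagnosis ($Q=\varepsilon$, an adjacent equal pair) is too narrow: the twist of $C(3,1)^2$ has $\phi$-image $[\leta\letb\leta\letb\letc\leta\letb\letc]$ with $\ab{4}=4$, although there $Q\neq\varepsilon$ and no equal adjacent pair is created. The correct restriction --- and the one the paper's Steps 2--3 actually output --- is that only Christoffel words with an even number of $\leta$'s may be twisted. Finally, your disjointness argument via the adjacency type of the $2$-spectrum is workable, though the paper obtains disjointness for free from its exhaustive case analysis ($Q=\varepsilon$ versus $Q\neq\varepsilon$, balanced versus non-balanced).
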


\subsection{Bi-infinite aperiodic abelian-$3$-bounded words over $A_3$}

Construction of the words $\phi(\mathcal{C}^{\text{{\normalfont tw}}})$ can be applied to obtain some infinite and bi-infinite aperiodic ternary words with abelian complexity $3$.  The following proposition generalises Theorem $4.3$ in \cite{RSZ}. 

Recall that a bi-infinite word $W = \dots a_{-1}a_0 a_{1}\dots$ is referred to as periodic if there is a positive integer $p$ such that $a_{i+p} = a_{i}$ for all $i\in \mathbb{Z}$. As a weaker property, $W$ is called ultimately periodic if there is $J\in\mathbb{Z}$ such that $a_{i+p} = a_{i}$ for all $i\geqslant J$. If $W$ is not ultimately periodic, it is called aperiodic.
\begin{proposition}\label{thm:infinite}
   Let $\omega\in \{\leta,\letb\}^{\mathbb{Z}}$ be an aperiodic bi-infinite word, and let $Q$ be a palindrome obtained as $\phi(C(m,n)) = \leta Q \letc$, $m$ even. 
   Then the image of $\omega$ under the morphism $\leta \to Q\letc\leta$, $\letb\to Q\leta\letc$ is abelian-$3$-bounded with abelian complexity $3$.
\end{proposition}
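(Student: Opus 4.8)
The plan is to bound, for every length $n$, the number of distinct Parikh vectors of factors of the image word $W$ by that of a single \emph{finite} circular word lying in $[\cM_3]$, and then to invoke Theorem \ref{thm:main_2}; the value $3$ is then attained for free. First I would record the block structure. Writing $L=|Q|+2=m+n$, the word $W$ is a concatenation of blocks $w_{\leta}=Q\letc\leta$ and $w_{\letb}=Q\leta\letc$, one per letter of $\omega$, and I call a block \emph{flipped} when it equals $w_{\letb}$. Two facts are immediate: (i) $\com(w_{\leta})=\com(w_{\letb})$, since the blocks are anagrams, so each complete block carries the same Parikh vector; and (ii) $\letb$ occurs only inside the invariant factor $Q$, so the positions of $\letb$ in $W$ form a periodic set of period $L$, identical to that of the purely periodic reference word $P:=(w_{\leta})^{\mathbb{Z}}$. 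Since $[w_{\leta}]=[\phi(C(m,n))]$ is balanced (Theorem \ref{thm:main_1}), $P$ is a balanced ternary word and $\#\mathsf{spec}_n P\le 3$ by Lemma \ref{thm:BsubM} (applied to a high enough power).

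The technical core is a window-by-window comparison of $W$ with $P$. For an interval $[p,p+n)$ set $u=W[p,p+n)$ and $u_0=P[p,p+n)$. The two words differ only on the two-letter suffix of each flipped block, where $W$ reads $\leta\letc$ instead of $\letc\leta$; writing $x_i$ for the first of these positions in a flipped block, such a block alters $\com(u)-\com(u_0)$ exactly when one of $x_i,x_i+1$ lies in the window, i.e. when a window boundary falls between them. As a window has only two boundaries, I obtain
\begin{equation*}
\com(u)=\com(u_0)+\delta\,(\leta-\letc),\qquad \delta\in\{-1,0,1\},
\end{equation*}
where $\delta=+1$ forces the right boundary to sit at a flipped $x_i$ (so $u_0$ is the unique reference window at one residue $r_+ \bmod L$) and $\delta=-1$ forces the left boundary to sit at a flipped $x_i+1$ (residue $r_-$). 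Hence $\mathsf{spec}_n W\subseteq \mathsf{spec}_n P\cup\{v_+,v_-\}$, where $v_\pm=\com(u_0^{(r_\pm)})\pm(\leta-\letc)$ are the two boundary-deviation vectors.

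Crucially, all of these finitely many vectors already occur inside the single-flip circular word $C_1:=[(w_{\leta})^{a}\,w_{\letb}\,(w_{\leta})^{b}]$ once $a,b$ are large enough: the $\delta=0$ vectors appear in its long unflipped runs, $v_+$ appears at the window ending at its unique flipped suffix, and $v_-$ at the window starting just after it. Thus $\mathsf{spec}_n W\subseteq\mathsf{spec}_n C_1$ for every $n$. To finish the upper bound I would identify $C_1$ with a once-twisted word: flipping one block $w_{\leta}\to w_{\letb}$ corresponds, modulo $\GZ$, to a single interchange $\letb\leta\to\leta\letb$ at a border of $(\leta Q_0\letb)^{p}$ before applying $\phi$, so $[C_1]\in\mathfrak{S}_3\big[\phi(\mathcal{C}^{\mathrm{tw}})\big]\subseteq[\cM_3]$. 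By Theorem \ref{thm:main_2} the word $[C_1]$ is abelian-$3$-bounded, whence $\ab{n}(W)=\#\mathsf{spec}_n W\le\#\mathsf{spec}_n C_1\le 3$ for all $n$. For the reverse inequality it suffices that $\ab{1}(W)=3$: every block contains $\leta$, $\letc$ and (as $m\ge 2$) a $\letb$ inside $Q$, so all three single-letter Parikh vectors occur and the maximal abelian complexity equals $3$. Aperiodicity of $W$ is inherited from that of $\omega$ through recognizability of the block decomposition, since the periodic $\letb$-pattern marks the block boundaries.

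The step I expect to be the main obstacle is the reduction $\mathsf{spec}_n W\subseteq\mathsf{spec}_n C_1$, which must hold uniformly in $n$ and for an \emph{arbitrary} aperiodic $\omega$: its validity rests entirely on the decomposition above, namely that interior flips cancel in the Parikh vector and only the (at most two) flips met by the window boundaries survive. Equivalently, one must verify the "conservation'' that the deviation vectors $v_\pm$ never enlarge the spectrum past three points; I would cross-check this against a direct computation for $C_1$ and confirm the single-flip $\leftrightarrow$ single-twist correspondence needed to apply Theorem \ref{thm:main_2}.
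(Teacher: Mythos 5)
Your upper-bound argument is correct and is, in substance, the paper's own reduction: the paper proves the proposition by appealing to Step 4 of the proof of Theorem \ref{thm:main_2}, and your window-by-window comparison with the periodic word $P$, followed by the identification of the single-flip circular word $C_1$ with a once-twisted word in $\mathfrak{S}_3\big[\phi(\mathcal{C}^{\text{tw}})\big]$, is a clean and valid way of organising exactly that reduction to the finite circular case.

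The genuine gap is the lower bound. In this proposition ``with abelian complexity $3$'' means $\ab{n}(W)=3$ for \emph{every} $n\geqslant 1$: this is what Theorem 4.3 of \cite{RSZ}, which the proposition generalises, asserts, and it is what the explicit three-element spectrum computed at the end of Step 4 (at the scales $n=k|w|$) reflects. Your proof establishes only $\ab{1}(W)=3$ together with $\ab{n}(W)\leqslant 3$, i.e.\ abelian-$3$-boundedness; it never proves $\ab{n}(W)\geqslant 3$ for $n>1$, and, tellingly, it never uses the aperiodicity of $\omega$ for the complexity claim. Aperiodicity is indispensable there. At scales $n=kL$ your reference word $P$ has a one-element spectrum, so to reach three one must show that \emph{both} deviation vectors $v_{\pm}$ are realised in $W$; unwinding your own bookkeeping, $v_{+}$ (respectively $v_{-}$) occurs at scale $kL$ if and only if $\omega$ contains the pattern $\omega_{i}=\leta,\ \omega_{i+k}=\letb$ (respectively $\omega_{i}=\letb,\ \omega_{i+k}=\leta$). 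If one of these patterns is absent for some $k$, then every residue class of $\omega$ modulo $k$ is eventually constant, hence $\omega$ is ultimately periodic; so aperiodicity is precisely what guarantees both patterns for every $k$, and an analogous (easier) realisation argument is needed at scales $n\not\equiv 0 \pmod L$ to see that three distinct vectors actually occur. That this step cannot be skipped is shown by $\omega=(\leta\letb)^{\mathbb{Z}}$: your entire argument applies verbatim to it, yet the image $W$ is periodic with period $2L$, so $\ab{2kL}(W)=1$ and the conclusion fails. (Your closing remark about aperiodicity of $W$ via recognizability addresses a different claim --- aperiodicity of the image, which is not what the proposition asserts --- and does not substitute for the missing lower bound.)
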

The proof of the above proposition follows from the Step $4$ in the proof of Theorem \ref{thm:main_2}. There is an immediate corollary for infinite words.
\begin{corollary}
    Let $W = \dots a_{-1} a_0 a_{+1}\dots $ be a bi-infinite ternary word obtained via the morphism described in Proposition \ref{thm:infinite}. Then, fixing any $i\in\mathbb{Z}$, the infinite word $W^{\prime} = a_{i}a_{i + 1}\dots$ is abelian-$3$-bounded with abelian complexity $3$.
\end{corollary}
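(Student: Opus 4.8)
The plan is to reduce the corollary to the bi-infinite case already established in Proposition \ref{thm:infinite}. The key observation is that abelian complexity of a one-sided infinite word is controlled by the factors that appear in it, and every factor of $W^{\prime} = a_i a_{i+1}\dots$ is also a factor of the bi-infinite word $W$. Thus for each length $n$ one has the inclusion $\mathsf{spec}_n W^{\prime} \subseteq \mathsf{spec}_n W$ (where spectra of infinite words are understood as the set of Parikh vectors of length-$n$ factors), whence $\ab{n}(W^{\prime}) \leqslant \ab{n}(W) \leqslant 3$ by Proposition \ref{thm:infinite}. This immediately gives the abelian-$3$-boundedness half of the claim, and is the easy direction.

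The substantive part is to verify that the abelian complexity of $W^{\prime}$ is \emph{exactly} $3$, i.e.\ that it does not drop to $2$ or below upon passing to a one-sided tail. First I would recall from Proposition \ref{thm:infinite} that the complexity $3$ of $W$ is witnessed at some finite length $n_0$ by three distinct Parikh vectors, realised by three factors $u_1, u_2, u_3 \subset W$ occurring at specific (finite) positions in $W$. The plan is to show that these same three abelian classes already occur arbitrarily far to the right in $W$, so that shifting the starting index $i$ cannot remove them. Concretely, since $\omega \in \{\leta,\letb\}^{\mathbb{Z}}$ is aperiodic, both letters $\leta$ and $\letb$ recur infinitely often to the right; because the three witnessing classes arise from the local patterns produced by the morphism $\leta \to Q\letc\leta$, $\letb \to Q\leta\letc$ across block boundaries (the interchange $\letb\leta \leftrightarrow \leta\letb$ being the source of the third class), each of the three length-$n_0$ abelian classes reappears near infinitely many block boundaries of each type. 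Hence every tail $a_i a_{i+1}\dots$ still contains factors of all three classes at length $n_0$, giving $\ab{n_0}(W^{\prime}) = 3$.

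I expect the main obstacle to be the bookkeeping needed to confirm that all three witnessing classes genuinely recur to the right, rather than some being accidental artifacts of the left tail or of a single boundary. This requires a clear description, borrowed from Step $4$ of the proof of Theorem \ref{thm:main_2}, of exactly which factors of the morphic image realise the three distinct Parikh vectors and how their occurrence is forced by the occurrence of the length-$n_0$ windows straddling $\leta$-blocks, $\letb$-blocks, and the two mixed boundaries. Once the recurrence of all three classes to the right is secured, combining it with the upper bound $\ab{n}(W^{\prime}) \leqslant \ab{n}(W) \leqslant 3$ closes the argument. A minor technical point to handle is the convention for spectra of infinite words and the observation that the complexity of $W^{\prime}$ is independent of the choice of $i$, which follows because any two tails differ only by finitely many leading letters and thus share the same set of sufficiently long factors.
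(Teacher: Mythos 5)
Your overall strategy --- the upper bound $\ab{n}(W^{\prime})\leqslant\ab{n}(W)\leqslant 3$ by inclusion of factor sets, plus recurrence of the witnessing abelian classes arbitrarily far to the right --- is exactly what makes the corollary ``immediate'' in the paper, which gives no separate proof and defers everything to Proposition \ref{thm:infinite} and Step 4 of the proof of Theorem \ref{thm:main_2}. The upper-bound half of your argument is complete and correct.

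The gap is in the lower bound, and it comes from your reading of the statement. You interpret ``abelian complexity $3$'' as ``the complexity does not drop to $2$ or below,'' and accordingly secure a single witnessing length $n_0$. But the proposition generalises Theorem 4.3 of \cite{RSZ}, and Step 4 establishes $\#\mathsf{spec}_n = 3$ at \emph{every} length; the intended claim is $\ab{n}(W^{\prime}) = 3$ for all $n\geqslant 1$, not merely that $3$ is the minimal uniform bound. Proving this requires the recurrence argument at every scale, and there the witnesses are more delicate than ``patterns near block boundaries.'' At scale $|Q|+2$ the two unbalanced classes come from the two \emph{different} junction types: $\leta Q\leta$ (Parikh vector $2\leta+\com(Q)$) occurs only across an $\leta\letb$ junction of $\omega$, while $\letc Q\letc$ (Parikh vector $2\letc+\com(Q)$) occurs only across a $\letb\leta$ junction; so ``both letters recur'' must be upgraded to ``both junction types recur'' (easy: if $\leta\letb$ occurred only finitely often to the right, every sufficiently late $\leta$ would be followed by $\leta$, making the tail eventually constant and $\omega$ ultimately periodic). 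More seriously, at a length $n$ spanning roughly $k$ blocks the witnesses for the two unbalanced classes are windows whose two ends cut junction pairs about $k$ blocks apart with opposite orientations, so what is actually needed is: for every $k\geqslant 1$ there are infinitely many $i$ to the right with $(\omega_i,\omega_{i+k})=(\leta,\letb)$ and infinitely many with $(\omega_i,\omega_{i+k})=(\letb,\leta)$. This too follows from aperiodicity --- if one of the two patterns eventually never occurred, each residue class modulo $k$ of the tail of $\omega$ would be eventually constant, so $\omega$ would be ultimately periodic --- but it is strictly stronger than the single-boundary recurrence your plan relies on, and without it the ``for all $n$'' claim is not closed.
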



\section{Geometrical constructions}\label{sec:geometry}

\subsection{Balanced words as $3$-dimensional\\ discrete approximations}

In this section we propose a geometrical way to construct all balanced words, except the Fraenkel word, by generalising discrete approximation representation for Christoffel words from Definition \ref{def:Christoffel} to $3$-dimensional space.

First, we define the notion of a $3$-dimensional discrete approximation for a pair of rational slopes $(q_1,q_2)$ as follows. Let points of a $3$-dimensional space $\mathbb{R}^{3}$ be parametrised as $(x,y,z)$. Starting from $(0,0,0)$ one constructs a path under a plane $z = q_1 x + q_2 y$ by performing unit steps $\xi = (1,0,0)$, $\eta = (0,1,0)$, $\zeta = (0,0,1)$. Step $\zeta$ is performed always if it does not lead to getting above the plane. Otherwise, one performs one of the steps $\xi,\eta$ such that they alternate along the path and step $\xi$ is made first. Procedure can be terminated at any point $(x^*,y^*,z^*)$ where $x^* = y^*$ and $z^* = q_1\,x^* + q_2\,y^*$. Such point always exists because numbers $q_1,q_2$ are rational.

Next, for any word $w\in\cA_{3}$ one can consider a line $\gamma(w)\subset\mathbb{R}^3$, which starts at the origin $(0,0,0)$ and proceeds by unit steps $\xi$, $\eta$, $\zeta$ for the letters $\leta$, $\letb$ and $\letc$ respectively. The line $\gamma(w)$ will be referred to as graphical representation for $w$.

\begin{figure}[H]
    \centering
    \includegraphics[width=0.4\textwidth]{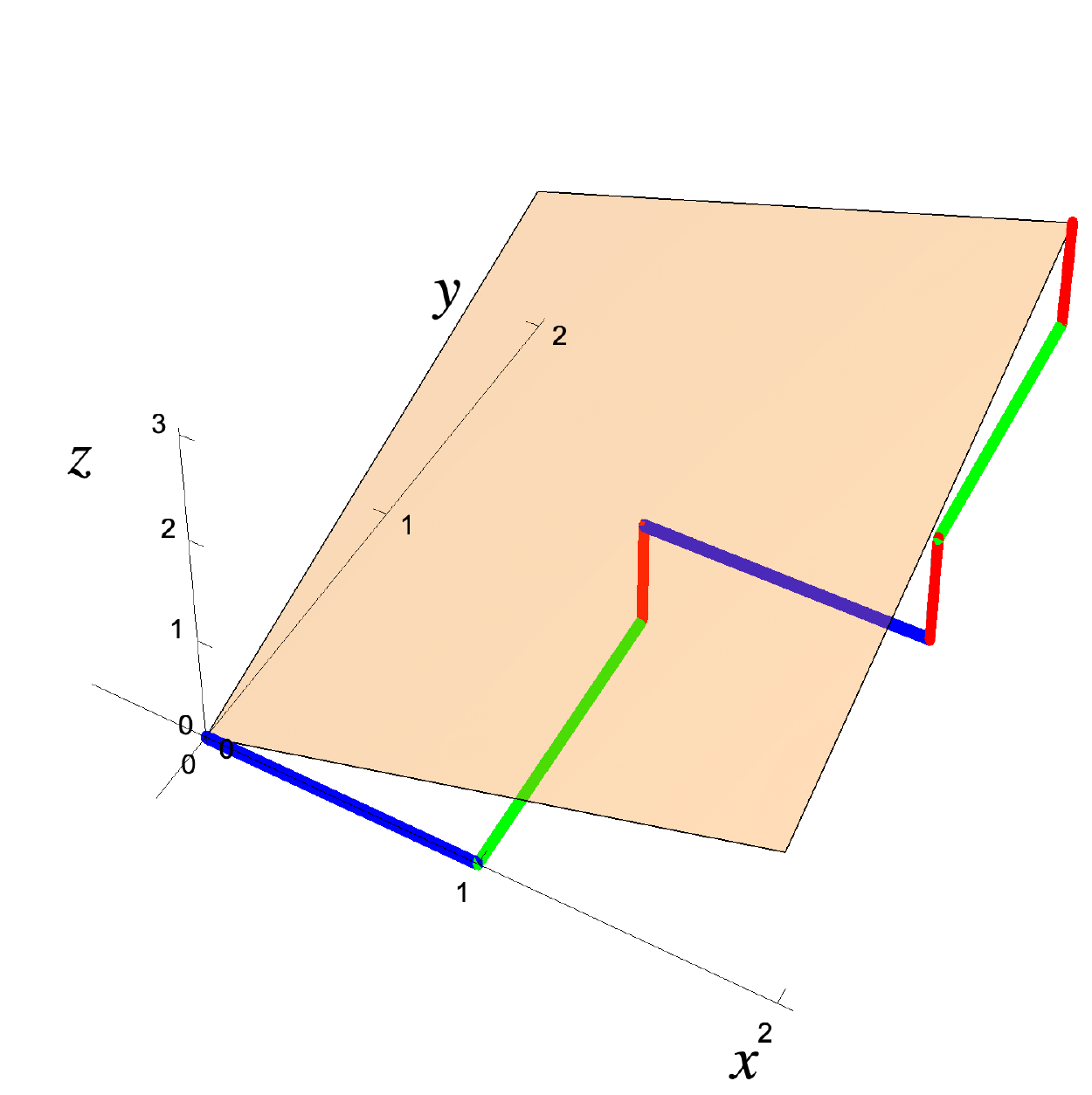}
    \caption{Graphical representation for $\phi(C(4,3)) = \leta\letb\letc\leta\letc\letb\letc$: blue (respectively, green and red) lines correspond to steps along $x$ (respectively, $y$ and $z$) read as $\leta$ (respectively, $\letb$ and $\letc$). The line $\gamma(\phi(C(4,3)))$ is a $3$-dimensional discrete approximation of a pair of slopes $(q,q)$ with $q=3\slash 4$.}
    \label{fig:representation_3d}
\end{figure}

The following theorem gives a graphical way of constructing the set $\phi(\mathcal{C}^{\prime})$ which sufficiently parametrises the set of balanced circular words $[\cB_{3}]\backslash[\mathcal{F}_{3}]$ by Theorem \ref{thm:main_1}.
\begin{theorem}
    For a ternary word $w\in\cA_{3}$ the following two assertions are equivalent:
    \begin{itemize}
    \item[1)] there is a Christoffel word such that $w = \phi(C(m,n)^{p})$, with $pm$ even,
    \item[2)] the graphical representation $\gamma(w)$ is a
    discrete approximation of pairs of rational slopes $(\tfrac{n}{m},\tfrac{n}{m})$ by reading step vectors as $\xi\to\leta$, $\eta\to\letb$, $\zeta\to\letc$ along the path.
    \end{itemize}
\end{theorem}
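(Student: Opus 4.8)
The plan is to prove the two directions separately, exploiting the recursive/palindromic structure of Christoffel words and the strict alternation rule built into the $3$-dimensional discrete approximation.

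\textbf{From (1) to (2).} First I would reduce to the primitive case. By Definition \ref{def:Christoffel}, a Christoffel word $C(m,n)$ is the ordinary $2$-dimensional discrete approximation of the slope $n/m$: it is the lattice path below the line $z = (n/m)\,x$ built from horizontal steps $\xi=(1,0)$ (read as $\leta$) and vertical steps $\zeta=(0,1)$ (read as $\letb$), taking $\zeta$ whenever possible. The map $\phi$ replaces every second $\leta$ by $\letb$, i.e. it splits the horizontal direction into two alternating directions $\xi$ and $\eta$ in $\mathbb{R}^3$, with $\xi$ used first (this matches the requirement $pm$ even, ensuring the alternation closes up consistently over the full word). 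I would show that under this splitting the single $2$D rule ``take $\zeta$ unless it crosses $z=(n/m)x$'' becomes exactly the $3$D rule ``take $\zeta$ unless it crosses $z = (n/m)x + (n/m)y$, otherwise alternate $\xi,\eta$ starting with $\xi$'': the key point is that after $j$ combined horizontal steps the $2$D $x$-coordinate equals $x+y$ in the $3$D picture, and the plane $z=(n/m)(x+y)$ restricts to the line $z=(n/m)x$ on the path. Hence $\gamma(\phi(C(m,n)))$ is precisely the prescribed $3$D discrete approximation of the pair $(n/m, n/m)$. For higher powers $p$, since $\phi(c^p)=\phi(c)^p$ and the endpoint of one block has $x^*=y^*$, $z^*=(n/m)(x^*+y^*)$, the path simply concatenates $p$ translated copies, each again a valid segment of the same approximation; this is where I would invoke that the alternation parity is preserved precisely when $pm$ is even.

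\textbf{From (2) to (1).} Conversely, given a word $w\in\cA_3$ whose graphical representation is the discrete approximation of $(n/m,n/m)$, I would run the construction in reverse. Projecting out the distinction between $\xi$ and $\eta$ (i.e. merging $\leta$ and $\letb$ back into a single horizontal letter) collapses the $3$D path to a $2$D path under $z=(n/m)x$ obeying the greedy ``take $\zeta$ when possible'' rule, which by Definition \ref{def:Christoffel} is a power of the Christoffel word $C(m,n)$. The alternation rule (start with $\xi$, then strictly alternate) guarantees that the inverse image of this merging is unique and equals $\phi$ applied to that Christoffel power, with the parity condition $pm$ even being exactly the condition for the $\xi/\eta$ pattern to be globally consistent. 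I would also note that the termination condition $x^*=y^*$, $z^*=(n/m)x^*+(n/m)y^*$ forces the number of $\leta$'s and $\letb$'s to be equal, which is precisely the image of the ``even number of $\leta$'' subset $\mathcal{C}^{\prime}$ under $\phi$.

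\textbf{Main obstacle.} The delicate step is the bookkeeping of the alternation parity at the junctions between primitive blocks when $p>1$, and correspondingly the ``start with $\xi$'' convention at the very origin. Concretely, I expect the hard part to be verifying that the greedy $3$D rule never forces two consecutive steps in the \emph{same} horizontal direction and that the forced alternation is compatible with the endpoint condition $x^*=y^*$ exactly when $pm$ is even; handling the odd-$m$ case requires $p$ even, matching the definition of $\mathfrak{c}^{\prime}$ and $\mathcal{C}^{\prime}$. I would isolate this as a short parity lemma comparing the number of horizontal steps in each primitive factor (which equals $m$) against the alternation pattern, and lean on Lemma \ref{lem:palindrome} and the palindromic form $\leta Q\letb$ to control where the $\letb\leta\to\leta\letb$ structure sits, so that the whole path closes without a repeated horizontal direction.
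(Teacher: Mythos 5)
Your proposal is correct and follows essentially the same route as the paper: the paper's proof also reduces the $3$-dimensional approximation of the pair $(q,q)$ to the $2$-dimensional approximation of slope $q$ by collapsing the two horizontal directions (phrased there as orthogonal projection onto the plane $x=y$), identifies the projected path of $\gamma(\phi(C(m,n)^p))$ with the graphical representation of $C(m,n)^p$, and uses the endpoint condition $x^*=y^*$ to encode the evenness of the number of horizontal steps, i.e.\ $pm$ even. The alternation bookkeeping you flag as the main obstacle is in fact immediate, since alternation of $\xi,\eta$ is imposed by the definition of the $3$-dimensional approximation rather than derived, so no separate parity lemma (nor Lemma \ref{lem:palindrome}) is needed.
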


\begin{proof}
   First, we demonstrate that $2$-dimensional discrete approximation with the slope $q$ and $3$-dimensional discrete approximation for the pair $(q,q)$ can be constructed from one another. For a $3$-dimensional discrete approximation of a pair of rational slopes $(q,q)$ consider its orthogonal projection to the plane $x = y$. As soon as $z$-coordinate of any point is preserved under the projection, crossing the plane $z = q(x+y)$ by performing step $\zeta$ in $\mathbb{R}^3$ is equivalent to crossing the projection of the plane. More to that, each step $\xi$, $\eta$ increases the value of $z$ for the plane $z = q(x+y)$ by $q$, and the same holds for the projection. Finally, both $\xi$, $\eta$ are projected to the same horizontal step vector. If the length of the latter is scaled to be $1$ then $3$-dimensional discrete approximation of a pair $(q,q)$ is projected to a $2$-dimensional discrete approximation of a slope $q$ with even number of horizontal steps. Reverting all the steps, any $2$-dimensional discrete approximation of a rational slope $q$ with even number of horizontal steps can be turned to a $3$-dimensional discrete approximation of the pair $(q,q)$. 
    
    Consider $w = \phi(C(m,n)^p)$ with $p = 1$ (respectively, $2$) for $m$ even (respectively, odd). Let us verify that $q = n\slash m$ is the slope for a sought discrete approximation. Indeed, orthogonal projection of the graphical representation of $w$ to the plane $x = y$ leads to the graphical representation for $C(m,n)^r$ (because of the structure of the map $\phi$). Because the latter is a $2$-dimensional discrete approximation, we arrive at the conclusion that graphical representation of any $w\in\phi(\mathcal{C}^{\prime})$ is a $3$-dimensional discrete approximation.
    
    Other way around, any $3$-dimensional discrete approximation of a pair of rational slopes $(q,q)$ is projected to a $2$-dimensional discrete approximation of a slope $q$, which is equivalent to a word from $\mathcal{C}^{\prime}$.
\end{proof}

\subsection{Graph for balanced circular words}

Relation of balanced circular words over $A_3$ to Christoffel words over $A_2$, according to the Theorems \ref{thm:main_1} and \ref{thm:main_2}, allows us to arrange them in a binary tree as follows. Consider a graph of pairs of coprime numbers (Calkin–Wilf tree), then substitute each pair $(m,n)$ at each vertex by a triple $(m,m,n)$.
\begin{figure}[H]
    \centering
    \includegraphics[width=0.9\textwidth]{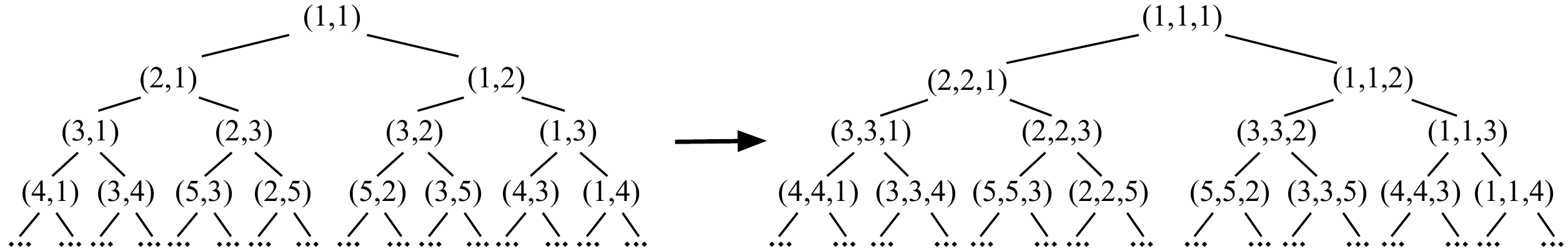}
    \caption{A graph of pairs of coprime numbers.}
    \label{fig:graph_primes}
\end{figure}
Each coprime triple $(m,m,n)$ implies a word $[w]\in[\cB_{3}]$ such that $|w|_{\leta} = |w|_{\letb} = m$ and $|w|_{\letc} = n$. Recall that any word $[w]\in[\cB_{3}]\backslash [\cF_{3}]$ is constructed as a $\phi$-image either of $C(2m,n)$ (for odd $n$) or $C(m,n^{\prime})C(m,n^{\prime})$ (in this case assign $n = 2n^{\prime}$) which allows us to arrange the set $[\cB_{3}]\backslash [\cF_{3}]$ in a graph presented on fig.~\ref{fig:graph_numb} (with representatives fixed up to cyclic permutations and $\mathfrak{S}_3$-action). To our knowledge, pairs of words joined by edges are not related by a morphism.
\begin{figure}[H]
    \centering
    \includegraphics[width=0.9\textwidth]{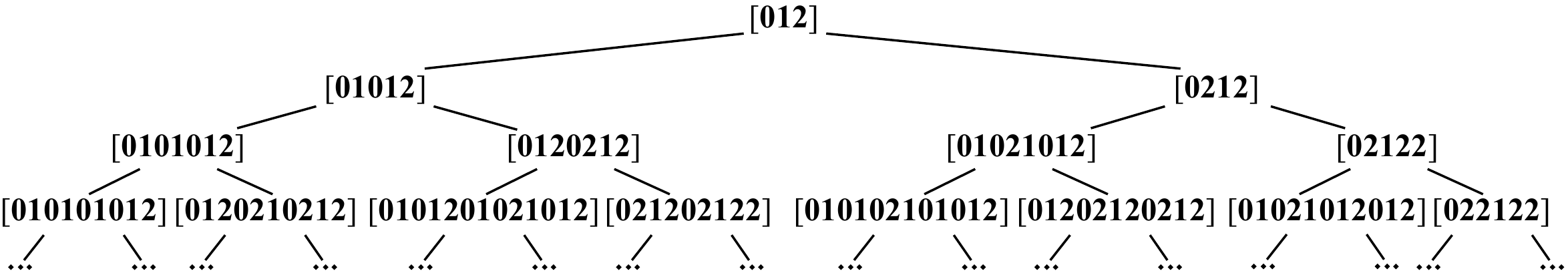}
    \caption{}
    \label{fig:graph_numb}
\end{figure}
Note that $(2m,n)$ (for $n$ odd) and $(m,n^{\prime})$ (for $n=2n^{\prime}$) are indeed pairs of coprime numbers provided that $(m,n)$ is a coprime pair. All words from $[\cB_{3}]\backslash [\cF_{3}]$ (modulo interchanging letters by $\mathfrak{S}_3$) indeed enter the graph because to any Christoffel word $C(m,n)$ there corresponds a pair of coprime numbers $(\frac{m}{2},n)$ (for $m$ even) or $(m,2n)$ (for $m$ odd) belonging to the left graph on fig \ref{fig:graph_primes}. 

In order to make inversion symmetry of the words from $[\cB_{3}]$ manifest, as well as ``factor out'' the action of $\mathfrak{S}_3$, we consider the following illustration. A circular word of length $\ell$ is represented by a graph with $\ell$ vertices placed on an oriented circle. Letters are mapped to vertices one-by-one such that each next letter is mapped to the next vertex. Edges join vertices in a way that any maximal subset of vertices corresponding to the same letter become vertices of a polygon. 
\begin{figure}[H]
    \centering
    \includegraphics[width=0.8\textwidth]{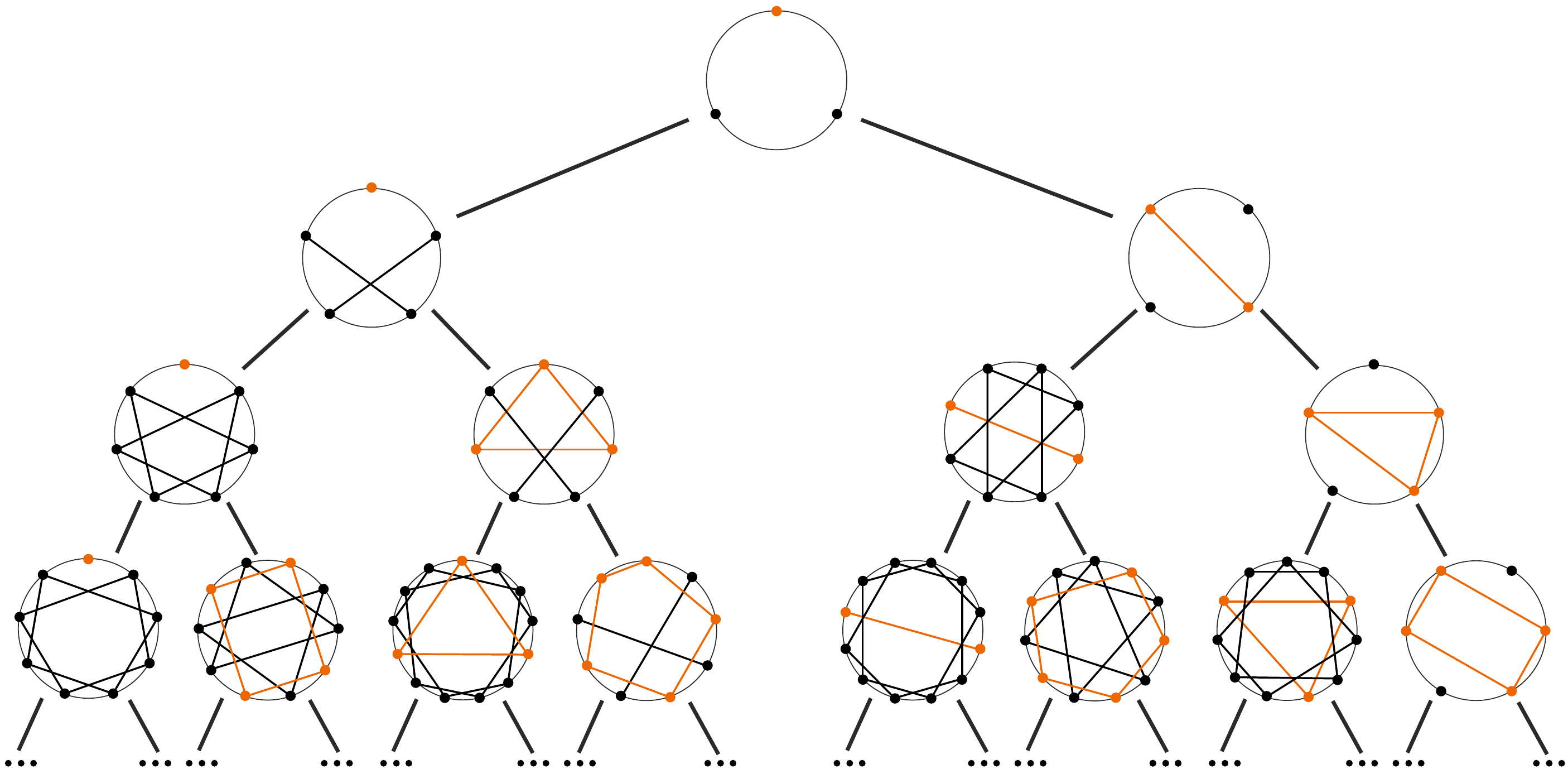}
    \caption{Illustration for the words from $[\cB_{3}]\backslash[\cF_{3}]$ trivialising $\mathfrak{S}_{3}$-action and making inversion symmetry manifest. Colouring of vertices and edges is made to visually separate alternating letters $\leta$, $\letb$ from $\letc$ in $\phi(\mathcal{C}^{\prime})$.}
    \label{fig:graph}
\end{figure}

\section*{Acknowledgements}
We are grateful to Anna Frid for enlightening discussions and instructive comments during preparation of the manuscript. The work of Y.G. is supported by a joint grant ``50/50'' UMONS -- Universit\'e Fran\c{c}ois Rabelais de Tours.

\appendix
\section{Proofs}\label{sec:proofs}

\begin{proof}[Proof of Theorem \ref{thm:main_1}]
Let  $C(k,M-k)\in\mathcal{C}^{\prime}$ be primitive (a Christoffel word with $k$ even). The respective circular word $[C(k,M-k)]$ is balanced with abelian complexity $\leqslant 3$ due to the Lemma \ref{lem:Christoffel_balanced_Myhill}, and hence its spectra are of the form $\mathsf{spec}_{n}[C(k,M-k)]=\{p\,\leta+(n-p)\letb,\,(p+1)\leta+(n-p-1)\letb\}$ with $p\leqslant k$. Suppose $p$ is even (similar arguments hold for $p$ odd). Then for the $\phi$-image $\mathsf{spec}_{n}[\phi(C(k,M-k))]$ is constituted by $\frac{p}{2}\leta+\frac{p}{2}\letb+(n-p)\letc$ and at least by one of the elements $(\frac{p}{2}+1)\leta+\frac{p}{2}\letb+(n-p-1)\letc$ or $\frac{p}{2}\leta+(\frac{p}{2}+1)\letb+(n-p-1)\letc$. Lemma \ref{lem:reducibility} implies that $[\phi(C(k,M-k))]$ is primitive.

Let $C(k,M-k)$ be a Christoffel word with $k$ odd. The $n$-spectra of $[\phi\big(C(k,M-k)C(k,M-k)\big)\big]$ for $n\neq M$ have similar form to the words with $k$ even described above. According to Lemma \ref{lem:reducibility} we have $\mathsf{spec}_{M}[C(k,M-k)C(k,M-k)]=\{k\leta+(M-k)\letb\}$. After applying the map $\phi$ we get $\mathsf{spec}_{M}[\phi(C(k,M-k)C(k,M-k))]=\{\frac{k-1}{2}\leta+\frac{k+1}{2}\letb+(M-k)\letc,\,\frac{k+1}{2}\leta+\frac{k-1}{2}\letb+(M-k)\letc\}$. Therefore, the words $[\phi\big(C(k,M-k)C(k,M-k)\big)]$ with $k$ odd are primitive by the Lemma \ref{lem:reducibility}. In addition the $M$-spectrum does not achieved the upper bound $K_3=3$. 

Finally, we will prove that non-trivial spectra of the words $[\phi\big(C(k,M-k)\big)]$ with $k$ even saturate the upper bound $K_3$, {\it i.e.}  $\#\mathsf{spec}_n\big[\phi(C(k,M-k))\big] = 3$ for all $1\leqslant n \leqslant M-1$. Recall that $\#\mathsf{spec}_1\big[\phi(C(k,M-k))\big] = 3$ already because $\phi$ is a map to $\cA_{3}$. If we suppose that for some $n > 1$ it happens that $\#\mathsf{spec}_{n}\big[\phi(C(k,M-k))\big] < 3$, then, without loss of generality and according to the aforesaid, $\mathsf{spec}_{n}\big[\phi(C(k,M-k))\big] = \left\{\frac{p}{2}\leta+\frac{p}{2}\letb+(n -p)\letc,\frac{p}{2}\leta+(\frac{p}{2}+1)\letb+(n -p-1)\letc\right\}$ for some even $p<n$. Such spectrum tells that any factor of length $\ell$ contains always the same amount $\tfrac{p}{2}$ of symbols $\leta$. This implies for the preimage $C(k,M-k)$ that for each $\leta$ situated at an odd position (less than $M$) in $C(k,M-k)^2$ there is another $\leta$ situated $n - 1$ steps after. Therefore for factors of the form $\leta P \letb \subset C(k,M-k)^2$ with $|P| = n - 1$ the position of $\leta$ is necessarily even. Due to Lemma \ref{lem:palindrome} $C(k,M-k) = \leta Q \letb$, where $Q$ is a palindrome with $k-1$ (odd) amount of $\leta$. Applying the above arguments to the first occurrence of $\leta$ we can write $C(k,M-k) = \leta P\leta Q^{\prime} \leta I(P)\letb$, where all explicitly written $\leta$ occur at odd positions. The obtained situation is in contradiction to the above arguments. 
\end{proof}
\vskip 0.7cm

\begin{proof}[Proof of Theorem \ref{thm:main_2}]
    Because the set $[\cB_{3}]$ is completely described by Theorem \ref{thm:main_1} we focus on words with abelian complexity $\leqslant 3$ that are not balanced. As a matter of convenience, the whole proof will be divided in a number of steps. 
    \paragraph{Step 1: elementary factors.} For any word $[w]\in[\cM_{3}]\backslash[\cB_{3}]$ there is a scale $\ell\geqslant 1$ such that $\mathsf{spec}_{\ell}[w] = \{\alpha,\beta,\gamma\}$ with $\beta - \alpha = a - c$ and $\gamma - \beta = b - c$ ($a,b\neq c$). This implies $\gamma - \alpha = a + b - 2c$ which causes $[w]\notin[\cB_{3}]$, and therefore we will say that $\ell$ is a non-balanced scale. Further, let $\ell$ represent the minimal non-balanced scale. It is convenient to map the letters of $[w]$ to elements of the $\ell$-spectrum such that the latter are obtained from factors starting from the corresponding letters. This gives rise to a circular word $[W_{\ell}]$ over the alphabet $\{\check{\alpha},\check{\beta},\check{\gamma}\}$, with the letters corresponding to the elements of $\mathsf{spec}_{\ell}[w]$. In this respect, there exists a factor $\cha\chb^k\chg\subset [W_{\ell}]$. This sequence is realised by a factor of the form $\underbracket[0.5pt]{cQcU }_{\ell}\underbrace{aQcU\dots aQcU}_{m\geqslant 0\text{ factors }aQcU} aQb\subset [w]$ with some $Q,U\in\hA_{N}$. This determines $\mathsf{spec}_{\ell}[w] = \{\alpha=2c+\com(Q)+\com(U),\beta=a+c+\com(Q)+\com(U),\gamma=a+b+\com(Q)+\com(U)\}$. Minimality of $\ell$ also fixes $U = \varepsilon$ because $\com(aQb) - \com(cQc) = a + b - 2c$, leading to the factor \begin{equation}\label{eq:factor}
        f_m^{(\ell)}(Q) = \underbracket[0.5pt]{cQc}_{\ell}\underbrace{aQc\dots aQc}_{m\geqslant 0\text{ factors }aQc}aQb.
    \end{equation}
Particular form of $Q$ is not specified at this stage, however its length $|Q| = \ell - 2$ is unambiguously related to the minimal non-balanced scale $\ell$. The two cases whether $Q$ is empty or not lead to different situations.

Let $Q = \varepsilon$ ({\it i.e.} $\ell = 2$). Let us show that one necessarily has $a\neq b$. Indeed, assuming the opposite and using that $[w]$ is circular and contains each letter at least once, one arrives at $aa\dots h_1 b^{n} h_2\dots cc\subset [w]$ with $h_1,h_2\in\{a,c\}$ and $n\geqslant 1$. If $h_1\neq h_2$ then $\mathsf{spec}_2[w]\ni 2a, 2c, b+h_1, b+h_2$, which contradicts $[w]\in[\cM_{3}]$. Let (without loss of generality) $h_1 = h_2 = a$. Then $\mathsf{spec}_2[w]$ contains $2a,2c,a+b$ and also at least one of the $a+c$ or $b+c$, which again contradicts $[w]\in[\cM_{3}]$. 

Thus, we have $f^{(2)}_m(Q) = cc(ac)^{m}ab$ ({\it i.e.} $Q = (ac)^m$). If $m\neq 0$ then necessarily $\mathsf{spec}_2[w] = \{2c, a+b, a+c\}$, which requires that $b$ enters $[w]$ only with a factor $aba$. But in this case $\mathsf{spec}_3[w] \ni a+2c,2a+c,a+b+c,2a+b$, which contradicts $[w]\in[\cM_{3}]$ and we are left with $f^{(2)}(Q)_0 = ccab$. As above, $\mathsf{spec}_2[w]$ is fixed and requires that $f_0^{(2)}(Q)a =  ccaba\subset [w]$. This in turn fixes $\mathsf{spec}_3[w] = \{a+2c,2a+b,a+b+c\}$. Together $\mathsf{spec}_2[w]$ and $\mathsf{spec}_3[w]$ require that $[w] = [\underbrace{w_0\dots w_0}_p]$ with the primitive factor $[w_0] = [ccaba]$. For any pairwise-different $a,b,c$ the so obtained primitive words are in $[\mathfrak{d}_{3}]$ (and all their powers constitute $[\mathcal{D}_{3}]$).

Let $Q\neq\varepsilon$. We demonstrate that condition $a\neq b$ leads to an inconsistency. Namely, factor \eqref{eq:factor} fixes $\mathsf{spec}_{\ell}[w]$ as $\alpha = 2c + \com(Q)$, $\beta = a + c + \com(Q)$ and $\gamma = a + b + \com(Q)$. If $m\neq 0$, then $\mathsf{spec}_{\ell+1}[w] = \{2a+c+\com(Q),a+2c+\com(Q),a+b+c+\com(Q)\}$, which requires $f^{(\ell)}_m(Q)\,c\subset [w]$. This implies in turn that a new element $b+c+\com(Q)\in\mathsf{spec}_{\ell}[w]$, leading to a contradiction with $[w]\in[\cM_{3}]$. We are left with $f^{(\ell)}_0(Q) = cQcaQb$. The spectrum $\mathsf{spec}_{\ell}[w]$ fixes the extension $f^{(\ell)}_0(Q) a\subset [w]$. Extended factor allows us to reconstruct $\mathsf{spec}_{\ell+1}[w] = \{a+2c+\com(Q), 2a+b+\com(Q), a+b+c+\com(Q)\}$. This restricts extension to the left to $af^{(\ell)}_0(Q)a\subset [w]$.Let $Q = \bar{Q}y$. The cases $y\in\{b,c\}$ lead to a non-balanced scale $\ell^{\prime} = 2 < \ell$ within $af_0^{(\ell)}(Q)a$, hence $y=a$. But then $\mathsf{spec}_{\ell+1}[w]\ni 3a+b+\com(\bar{Q}),3a+c+\com(\bar{Q}),2a+b+c+\com(\bar{Q}), 2a+2c+\com(\bar{Q})$ is in contradiction to $[w]\in[\cM_{3}]$.  Therefore necessarily $a = b$.

We arrive at the factor
\begin{equation}\label{eq:factor_Q}
    f^{(\ell)}_m(Q) = cQc\underbrace{aQc\dots aQc}_{m\geqslant 0\text{ factors }aQc}aQa \subset [w]
\end{equation}
which realises the sequence of the elements of $\ell$-spectrum $\cha\chb^{(m+1)\ell - 1}\chg\subset [W_{\ell}]$. As for a sequence $\chg\chb^{(m^{\prime}+1)\ell - 1}\cha$, it is realised by a factor of the form
\begin{equation}\label{eq:bfactor_Q}
    \bar{f}^{(\ell)}_{m^{\prime}}(Q^{\prime}) = aQ^{\prime}a\underbrace{cQ^{\prime}a\dots cQ^{\prime}a}_{m^{\prime}\geqslant 0\text{ factors }cQ^{\prime}a}cQ^{\prime}c \subset [w],\quad |Q^{\prime}| = \ell-2,
\end{equation}
which follows by repeating all steps from the very beginning. As a result, any sequence $\cha\chb^{(m+1)\ell - 1}\chg$ (respectively, $\chg\chb^{(m+1)\ell - 1}\cha$) in $[W_{\ell}]$ is realised by an elementary factor $f_m^{(\ell)}(Q)$ (respectively, $\bar{f}_m^{(\ell)}(Q)$) with some particular $m\geqslant 0$ and $|Q|=\ell-2$.

Finally, note that $Q$ in \eqref{eq:factor_Q} (similarly for $Q^{\prime}$ in \eqref{eq:bfactor_Q}) necessarily contains $b\neq a,c$: this is because $\beta - \alpha = \gamma - \beta = a - c$ and hence $[w] = [bP_1bP_2\dots bP_s]$ with $|P_1|=\dots=|P_s| = \ell - 1$, while $|f^{(\ell)}_m(Q)| > \ell - 1$.

\paragraph{Step 2: universality of elementary factors.} The structure of elementary factors \eqref{eq:factor_Q} and \eqref{eq:bfactor_Q} of $[w]\in[\cM_{3}]\backslash[\cB_{3}]$ restricts $Q$ to be universal, {\it i.e.} the same for all elementary factors of the form \eqref{eq:factor_Q} and $Q^{\prime} = Q$ for all elementary factors of the form \eqref{eq:bfactor_Q}. First, note that $\com(Q)$ is universal: this is verified for any two elementary factors by expressing $\alpha = 2a + \com(Q) = 2a + \com(Q^{\prime})$. Next, $f_m^{(\ell)}(Q)\subset [w]$ implies that $[cQa]\in[\cB_{3}]$ (factors \eqref{eq:bfactor_Q} are considered along the same lines). This follows immediately by checking that $\mathsf{spec}_{t}[cQa]\subset\mathsf{spec}_tf_m^{(\ell)}(Q)$ for all $t<\ell$ and taking into account that $\ell$ is the minimal non-balanced scale of $[w]$. From the Theorem \ref{thm:main_1} it follows that $[cQa] \in\mathfrak{S}_3\big[\phi(\mathcal{C}^{\prime})\big]$.

Moreover, it appears that $cQa \in \mathfrak{S}_3\left(\phi(C(k,M-k))\right)$ with $k$ even. To prove this we first show that $I(Q) = Q$ by induction. For the base suppose $Q = x Q_1 x^{\prime}$ such that $\mathsf{spec}_{2}[w]\ni a+c$,  $a+x$, $c+x$, $a+x^{\prime}$, $c+x^{\prime}$. Assuming $x\neq x^{\prime}$ contradicts either $[w]\in[\cM_{3}]$ or minimality of the non-balanced scale $\ell$. For the step of induction suppose $Q = V Q_r I(V)$ with $|V| = r$ for some $r\geqslant 1$. Then assuming $Q_r = x Q_{r+1} x^{\prime}$ implies $\mathsf{spec}_{r + 2}[w] \ni a+c+\com(V)$, $a+x+\com(V)$, $c+x+\com(V)$, $a+x^{\prime}+\com(V)$, $c+x^{\prime}+\com(V)$, which is inconsistent either with $[w]\in[\cM_{3}]$ or with minimality of the non-balanced scale $\ell$ unless $x = x^{\prime}$. As a result, $Q = I(Q)$ for any elementary factor \eqref{eq:factor_Q} of \eqref{eq:bfactor_Q}. This implies in turn that $cQa = \sigma(\phi(C(k,\ell-k)))$ with $k$ even and $\sigma\in\mathfrak{S}_3$ such that  $\sigma(\leta) = c$, $\sigma(\letb) = b$ and $\sigma(\letc) = a$. Indeed, among the representatives of $[C(k,\ell-k)^p]$ only at $p = 1$ only $C(k,\ell-k)$ itself is of the form $\leta S \letb$ with $I(S) = S$. Finally, $I(Q) = Q$ requires $k$ to be even, because only in this case $\phi(\leta S\letb) = \leta\, \sigma^{-1}(Q)\,\letc$ contains a palindrome $\sigma^{-1}(Q)$. 

Recall that $\com(Q)$ is universal in the above sense, and therefore for any $Q$ in \eqref{eq:factor_Q} or \eqref{eq:bfactor_Q} the word $cQa$ is a $\sigma\circ\phi$-image of the same Christoffel word $C(k,\ell-k)$ leading to the universality of $Q$. We arrive at the following important conclusion: factors $f^{(\ell)}_m(Q)$ are $\sigma\circ\phi$-images of twisted words from $\mathcal{C}^{\text{tw}}$ with $\letb\leta\to\leta\letb$ at all borders between primitive factors, while $\bar{f}^{(\ell)}_{m}(Q) = I\left(f^{(\ell)}_m(Q)\right)$. 

Note also the restriction following from the structure of a $\phi$-image: for $b\neq a,c$ any factor of the form $cPc$ in \eqref{eq:factor_Q} or \eqref{eq:bfactor_Q} is such that $|P|_b\neq 0$, in particular factor $cc$ is forbidden.

\paragraph{Step 3: merging of elementary factors.} At this step we demonstrate that sequences $\cha\chb^{k_1}\chg\chb^{k_2}\cha\chb^{k_3}\chg\dots \chb^{k_{2t}}\subset [W_{\ell}]$ with $t\geqslant 1$ and $k_1,\dots,k_{2t} \geqslant 1$ and are realised by alternating factors \eqref{eq:factor_Q} and \eqref{eq:bfactor_Q} properly ``merged'' to one another.

First, let us demonstrate that \eqref{eq:factor_Q} and \eqref{eq:bfactor_Q} are uniquely extended to
\begin{equation}\label{eq:f_extensions}
    f_{m}^{(\ell)}(Q)\subset a  f_{m}^{(\ell)}(Q) c\subset [w]\quad \text{and}\quad \bar{f}_{m^{\prime}}^{(\ell)}(Q)\subset c  \bar{f}_{m^{\prime}}^{(\ell)}(Q) a\subset [w],
\end{equation}
{\it i.e.} any sequence $\cha\chb^{m}\chg$ for \eqref{eq:factor_Q} (respectively, $\chg\chb^{m^\prime}\cha$ for \eqref{eq:bfactor_Q}) is extended to $\chb\cha\chb^{m}\chg\chb$ (respectively, $\chb\chg\chb^{m^\prime}\cha\chb$). We present how extension $f_{m}^{(\ell)}(Q)\subset  f_{m}^{(\ell)}(Q) c\subset [w]$ is fixed, while all the rest is considered along the same lines.

Recall that factor \eqref{eq:factor_Q} realises the sequence $\cha\chb^{(m+1)\ell - 1}\chg$, which can be extended to the right either by $\chb$ or $\chg$. Supposing the latter leads to extension $f^{(\ell)}_m(Q)a\subset [w]$. This means $\mathsf{spec}_{2}[w]\ni 2a, a+c$. As a result of the Step 1, for $b\neq a,c$ there is $|w|_b\neq 0$ what fixes $\mathsf{spec}_{2}[w] = \{2a,a+c,a+b\}$ because other possibilities would lead to a contradiction with minimality of the non-balanced scale $\ell > |Q|$. Therefore, we can fix $Q=aQ_1a$. We proceed inductively by supposing that $\mathsf{spec}_{r+1}[w] = \{(r+1)a, c + r\,a, b + r\,a\}$ and $Q = a^{r} Q_r a^{r}$ for $r\geqslant 1$ (where $Q_0 = Q$). Indeed, the form $Q = a^r Q_r a^r$ demands $\mathsf{spec}_{r+2}[w]\ni (r+2)a$, $c+(r+1)a$. There is necessarily another element $\mathsf{spec}_{r+2}[w]\ni b+(r+1)a$ fixed by minimality of the non-balanced scale $\ell >|Q|$. And finally $\mathsf{spec}_{r+2}[w]$ fixes $Q_r = aQ_{r+1}a$. Induction leads to $Q = a^{\ell - 2}$ which contradicts $|Q|_b \neq 0$. All in all we arrive at the uniqueness of the extension $f_{m}^{(\ell)}(Q)\subset  f_{m}^{(\ell)}(Q) c\subset [w]$, and by similar treatment to \eqref{eq:f_extensions}.

Next, let us show that extension of the factor $a f_m^{(\ell)}(Q) c\subset [w]$ to the right preserves $Q$ in a sense that the following factor is impossible:
\begin{equation}\label{eq:f_extension_Q}
    a f_m^{(\ell)}(Q) \underbrace{cQa\dots cQa}_{n\geqslant 0 \text{ factors } cQa} c \widetilde{Q} \subset [w]
\end{equation}
with $|\widetilde{Q}|=|Q|$ but $\widetilde{Q}\neq Q$. Recalling that $a+c+\com(Q) = \beta$, if $Q = P x P^{\prime}$ and $\widetilde{Q} = P \widetilde{x} \widetilde{P}^{\prime}$ (with a letter $\widetilde{x}\neq x$) then either $x = a$, $\widetilde{x} = c$ realising $\chb\cha \subset[W_{\ell}]$ or $x = c$, $\widetilde{x} = a$ realising $\chb\chg \subset[W_{\ell}]$. The former case leads to factors $aPa,c Pc\subset [w]$, which contradicts minimality of the non-balanced scale $\ell$, so $x = c$ and $\widetilde{x} = a$. Proceeding one step to the right, one fixes $Q=PcaP^{\prime\prime}$ and $\widetilde{Q} = P ac \widetilde{P}^{\prime\prime}$. Indeed, to avoid contradiction with balanced property we necessarily have either $Q=PcbP^{\prime\prime}$ or $Q=PcaP^{\prime\prime}$. The former case leads to a contradiction with Myhill property because $\mathsf{spec}_{|P|+3}[w]\ni 2c+b+\com (P), 2c+a+\com (P), 2a+c+\com (P), a+b+c+\com (P)$. By similar treatment we get $\widetilde{Q} = P ac \widetilde{P}^{\prime\prime}$. By continuing the above arguments we assume the following form of factors $Q=P_0caP_1ca\dots caP_k$ and $\widetilde{Q}=P_0acP_1ac\dots acP_k$, $k>0$.

The so-fixed $\widetilde{Q}$ allows to derive that $P_i$ ($0\leqslant i\leqslant k$) are palindromes. Indeed, factor \eqref{eq:f_extension_Q} contains $cP_i$, $P_ic$, $aP_i$, $P_ia$ and therefore one proceeds along the same lines as in the Step $2$ proving that $Q = I(Q)$. Then $cI(Q)a = cQa = \sigma(\phi(C(p,\ell-p)))$ with $p$ even and $\sigma\in\mathfrak{S}_3$ such that  $\sigma(\leta) = c$, $\sigma(\letb) = b$ and $\sigma(\letc) = a$. Then Christoffel word $C(p,\ell-p)$ has the form $C(p,\ell-p)=\leta \hat{P}_k\letb\leta\dots\letb\leta\hat{P}_1\letb\leta\hat{P}_0\letb $ where $\hat{P}_i$ ($0\leqslant i\leqslant k$) are palindromes. Moreover, it appears that $\leta \hat{P}_i \letb$ ($0\leqslant i\leqslant k$) are Christoffel words. This follows from the Christoffel tree construction: by reading $C(p,\ell-p)$ from left to right each factor $\letb\leta$ designates the last and the first letters of Christoffel factors.

 The condition for the words $cP_ia$ ($0\leqslant i\leqslant k$), to contain palindromes follows from the construction of the map $\phi$ that the number of occurrences of the letters $c$ and $a$ in $cP_ia$ is even. It implies the even number of occurrences of the letter $\leta$ in all Christoffel words $\leta \hat{P}_i \letb$, $0\leqslant i\leqslant k$. But this is in contradiction with the inductive construction of Christoffel words. As a result of the above contradictions we are left with $\widetilde{Q} = Q$.

As soon as $Q$ is preserved by extending $a f^{(\ell)}_{m}(Q)c$ to the right, {\it i.e.} that $\widetilde{Q} = Q$ in \eqref{eq:f_extension_Q}, the only possibility would be to change the letters after $Q$. But because $\com(acQ) = \beta$ the only possibilities for
\begin{equation}\label{eq:f_extension_Q_plus}
    a f_m^{(\ell)}(Q) \underbrace{cQa\dots cQa}_{n\geqslant 0 \text{ factors } cQa} c Q x \subset [w]
\end{equation}
would be $x = a$ or $x = c$. For the former case one also fixes the next letter $cQx \subset cQac$ by analysing the $3$-spectrum along the same lines as above, which returns to the \eqref{eq:f_extension_Q} and repeating the analysis. Another case $x = c$ corresponds to a sequence $\chg\chb^{(n+1)l-1}\cha\subset [W_{\ell}]$, which leads to a factor $\bar{f}^{(\ell)}_n(Q)$ \eqref{eq:bfactor_Q} ``merged'' to $f^{(\ell)}_m(Q)$ as follows
\begin{equation*}
    F^{(\ell)}_{m,n}(Q)cQc = cQc\underbrace{aQc\dots aQc}_{m\geqslant 0 \text{ factors } aQc} aQa \underbrace{cQa\dots cQa}_{n\geqslant 0 \text{ factors } cQa}cQc \subset [w].
\end{equation*}
By continuing the above arguments one arrives at factors of the form
\begin{equation}\label{eq:representative}
    F^{(\ell)}_{m_1,\dots,m_{2t}}(Q) := F^{(\ell)}_{m_1,m_{2}}(Q) F^{(\ell)}_{m_3,m_{4}}(Q)\dots F^{(\ell)}_{m_{2t-1},m_{2t}}(Q) \subset[w],\quad m_1,\dots, m_{2t}\geqslant 0,
\end{equation}
and thus $[w] = [F^{(\ell)}_{m_1,\dots,m_{2t}}(Q)]$ for some $m_1,\dots, m_{2t}\geqslant 0$ where $t\geqslant 1$. But this means exactly $[w] = [\sigma\circ\phi(T)]$ for some $T\in \mathcal{C}^{\text{tw}}$. 

\paragraph{Step 4.} To finish the proof let us verify that for all primitive $\widetilde{w}\in\mathcal{C}^{\text{tw}}$ one has $[\phi(\widetilde{w})]\in[\cM_{3}]$, in particular that all $\# \mathsf{spec}_{n}[\phi(\widetilde{w})] = 3$ for $n<|\widetilde{w}|$. Recall that $\widetilde{w}$ is constructed from a Christoffel word $w\in\mathcal{C}$ for which we have $[\phi(w^{p})]\in[\cM_{3}]$ for any $p\in\mathbb{N}$ (by the Lemmas \ref{thm:BsubM} and \ref{lem:building_blocks}). 

Let us prove that $\mathsf{spec}_{n}[\phi(\widetilde{w})]\subset \mathsf{spec}_{n}[\phi(w^p)]$ for all $n$ satisfying $k|w| < n < (k+1)|w|\leqslant p|w|$ at any $k\geqslant 1$. If a factor $W^{\text{tw}}\subset \phi(\widetilde{w})$ obtained from a factor $W\subset w$ of the length $n$ has the form $W^{\text{tw}}=Q^\prime x_1x_2 Q\dots Q x_{i}x_{i+1} Q^{\prime\prime} $, $x_1,\dots x_{i+1}=\leta,\letc$ (with $0\leqslant |Q^\prime|,|Q^{\prime\prime}|<|Q|$) then $\com(W^{\text{tw}})=\com (W)$. Another possible forms of factors are $W_1^{\text{tw}}=x_0 Q x_1x_2Q\dots Q x_{i}x_{i+1} Q^{\prime}$ or $W_2^{\text{tw}}= Q^{\prime} x_1x_2Q\dots Q x_{i}x_{i+1} Q x_0 $. We analyze the case $W^{\text{tw}}_1$ ($W^{\text{tw}}_2$ is considered along the same lines). If $x_0=\leta$ then it coincides with the first letter of the respective factor $W$ and  $\com(W^{\text{tw}})=\com (W)$. If $x_0=\letc$ then by construction there is a factor in $w$ of the form $W^\prime=Q^{\prime} \letc\leta Q\dots Q \letc\leta Q \letc$ such that $\com(W^{\text{tw}})=\com (W^\prime)$. With this at hand it is straightforward to conclude that also $\mathsf{spec}_{n}[\phi(\widetilde{w})]\subset \mathsf{spec}_{n}[\phi(w^p)]$ and $\# \mathsf{spec}_{n}[\phi(\widetilde{w})] = 3$ for  $k|w| < n < (k+1)|w| \leqslant p|w|$ at any $k\geqslant 1$. Along the same lines one can show the reverse inclusion $\mathsf{spec}_{n}[\phi(w^p)]\subset\mathsf{spec}_{n}[\phi(\widetilde{w})]$, thus by Theorem \ref{thm:main_1} $\#\mathsf{spec}_{n}[\phi(\widetilde{w})] = 3$ for all $n$ satisfying $k|w| < n < (k+1)|w|\leqslant p|w|$ at any $k\geqslant 1$.

For $n=k|w|<p\ell$ it is straightforward to find 
\begin{multline*}
    \mathsf{spec}_{n}[\phi(\widetilde{w})] = \{ 2\cdot\leta+(k-1)\,\leta+(k-1)\,\letc+k\,\com(Q), \\ k\,\leta+k\,\letc+k\,\com(Q),2\cdot\letc+(k-1)\,\leta+(k-1)\,\letc+k\,\com(Q)\}\,.
\end{multline*}

\end{proof}

\providecommand{\href}[2]{#2}\begingroup\raggedright\endgroup


\end{document}